\documentclass[12pt]{article}

\usepackage[citecolor=blue]{hyperref}
\usepackage[USenglish]{babel}
\usepackage{csquotes}
\usepackage[a4paper,margin=2.5cm]{geometry}

\usepackage[ocgcolorlinks]{ocgx2}

\usepackage{amsmath,mathtools}
\usepackage{amssymb,amsfonts,mathrsfs}

\usepackage{amsthm,thmtools}
\makeatletter
\@ifundefined{newcounteralias}{}{
    \renewcommand\thmt@autorefsetup{\@xa\def\csname\thmt@envname autorefname\@xa\endcsname\@xa{\thmt@thmname}}
}
\makeatother
\usepackage{reptheorem}

\usepackage[
    backend = biber,
    style = alphabetic,
    sorting = nyt,
    giveninits=true,
    maxbibnames=5,
    urldate=iso,
    seconds=true
]{biblatex}
\DeclareNameAlias{default}{family-given}
\newtheoremstyle
    {normalsl}
    {}{}{\slshape}{}{\bfseries}{.}{ }{}

\theoremstyle{normalsl}

\newtheorem{proposition}{Proposition}[section]
\newtheorem{lemma}[proposition]{Lemma}
\newtheorem{theorem}[proposition]{Theorem}
\newtheorem{claim}[proposition]{Claim}
\newtheorem{corollary}[proposition]{Corollary}

\theoremstyle{definition}

\newtheorem{definition}[proposition]{Definition}

\theoremstyle{remark}

\newtheorem{remark}[proposition]{Remark}

\title{Hamiltonian Floer Theory for Quantum Electrodynamics up to First Order in \texorpdfstring{\(\hbar\)}{ℏ}}
\author{Oliver Fabert\thanks{Vrije Universiteit Amsterdam} \and Jesse Straat\footnotemark[\value{footnote}]}
\date{August 20, 2026}

\begin{document}

\maketitle

\begin{abstract}
    We use Hamiltonian Floer theory to prove a cuplength result about the existence of periodic solutions of particle-field systems with a Gaussian random field. As a concrete model we study stochastic electrodynamics, which approximates quantum electrodynamics up to first order in \(\hbar\), and is even exact in the case of low-order Hamiltonians or when the particles are treated classically.
\end{abstract}
\tableofcontents
\section{Introduction}
Hamiltonian Floer theory is the fundamental tool in symplectic geometry used to prove dynamical results in classical mechanics, such as the existence of periodic solutions; for a general reference, see, e.g., \cite{audinMorseTheoryFloer2014}, and for the important case of cotangent bundles, see \cite{cieliebakPseudoholomorphicCurvesPeriodic1994} and the references therein. It is the goal of this paper to explore how Hamiltonian Floer theory can be used to study quantum dynamics, at least to some low order approximation in the Planck constant \(\hbar\).\par
The model we are studying is called stochastic electrodynamics (SED) and is considered to be the closest classical approximation to quantum electrodynamics, the quantum theory of the electromagnetic interaction. One original motivation is to understand the stability of the atom. From classical physics alone, in the Rutherford model, it follows that an electron, circling the nucleus as in the Kepler model of planetary motion, will fall into the nucleus in a very short time. Indeed, solving the coupled Maxwell--Lorentz equations, one finds that the electron, as an accelerating charge, emits electromagnetic radiation and hence loses energy until it comes to rest, colliding with the nucleus. Historically, this problem was solved using quantum theory, leading to the Bohr--Rutherford model (\cite{bohrConstitutionAtomsMolecules1913,bohrConstitutionAtomsMolecules1913a,bohrConstitutionAtomsMolecules1913b}) and the quantum mechanical atom (\cite{schroedingerQuantisierungAlsEigenwertproblem1926,schroedingerQuantisierungAlsEigenwertproblem1926a,schroedingerQuantisierungAlsEigenwertproblem1926b,schroedingerQuantisierungAlsEigenwertproblem1926c}), but stochastic electrodynamics takes a classical approach.\par
The key idea of stochastic electrodynamics is to treat the atom classically but in addition employ the vacuum fluctuations of the electromagnetic field --- that is, the quantum ground state of the electromagnetic field, also called the zero-point field --- to establish atomic stability. It can be viewed as a classical Gaussian random field, where the amplitude in each frequency is distributed normally with expectation value zero and standard deviation on the order of \(\hbar\), the average energy agreeing with the Casimir effect. The ground state of the atom is then characterized as an equilibrium in which the energy lost by the electron through radiation is precisely compensated (in the mean) by the energy absorbed from the zero-point field. \par
Until now, there is no mathematical proof showing whether this classical stochastic approach can really provide an alternative approach to atomic stability, though a heuristic argument is given in \cite{boyerRandomElectrodynamicsTheory1975}. However, the underlying idea has already been successfully applied for computing the ground state energy of the quantum harmonic oscillator in \cite{marshallRandomElectrodynamics1963}. Some researchers still work on establishing stochastic electrodynamics as an underlying theory behind quantum electrodynamics (see, e.g., \cite{delapenaEmergingQuantumPhysics2015,cettoQuantumMechanicsPhysical2025}). In this paper, we follow the more modest perspective of viewing stochastic electrodynamics as the closest (known) classical approximation to quantum electrodynamics, as in \cite{boyerStochasticElectrodynamicsClosest2019}.\par
In the recent paper \cite{santosAnalogyStochasticElectrodynamics2022}, the author uses the Weyl--Wigner approach to quantum theory to prove that the dynamics of a charged particle in quantum electrodynamics agrees with the dynamics of a charged particle in stochastic electrodynamics up to the first order in \(\hbar\). Indeed, the Moyal bracket --- the quantum extension of the classical Poisson bracket --- differs from the Poisson bracket only by terms of order \(\hbar^2\) and higher, so the dynamics up to first order are equivalent, assuming equal initial conditions for the field and for the particle. Since the Hamiltonian is quadratic with respect to the field coordinates, it moreover follows that the quantized field still satisfies the inhomogeneous Maxwell equations. In particular, its solution is the sum of the particular solution of the inhomogeneous system given by the charged particle dynamics and the solution of the homogeneous system given by the initial conditions. Instead of setting the homogeneous part equal to zero as in classical physics, one identifies the initial condition of the field with the zero-point field. Since the latter is of order \(\hbar\), it still remains visible after truncating all terms of order \(\hbar^2\) or higher. In other words, the dynamics in stochastic electrodynamics and in quantum electrodynamics agree up to first order in \(\hbar\) once the initial conditions for the particles are the same. Furthermore, the dynamics agree up to all orders with quantum models where the Hamiltonian is at most quadratic, such as in the case of the harmonic oscillator, or when the charged particle is considered fully classical (while the electromagnetic field remains fully quantized).\par
In this paper, we combine the work in \cite{fabertCuplengthEstimatesPeriodic2023} on periodic solutions of particle-field systems with the work in \cite{fabertCuplengthEstimatesTimeperiodic2024} on time-periodic measures of stochastic Hamiltonian systems to prove topological lower bounds on the number of time-periodic solutions in stochastic electrodynamics. More precisely, we consider the stochastic particle-field system given by the coupled Maxwell--Lorentz equations with the zero-point field. Note that, instead of imposing initial conditions for the particles, they are fixed by the goal of finding periodic solutions. The initial condition for the field is still given a priori by the zero-point field, i.e., the quantum ground state of the electromagnetic field. In order to be able to establish the necessary \(C^0\)-bounds, we assume that the charged particles sit in a periodic space, i.e., a torus. We further assume that the ratio between the space period \(L\) and the time period of the time-periodic solutions \(T\) is sufficiently generic. Moreover, we consider a limit in which magnetic forces are neglected, which may be understood as a low-speed (or nonrelativistic) limit. This stochastic model is a special case of what we will call a \emph{stochastic particle-field model}, on which we prove the following cuplength result.
\repthm{thm:cuplengthnonmagnetic}\par
This paper is organized as follows: first, we introduce the physical background in \autoref{sec:physicalbackground} by discussing Weyl--Wigner quantization and stochastic electrodynamics. Along the way, we fix our Hamiltonian treatment of electrodynamics in both Coulomb and Feynman--'t Hooft gauge. In \autoref{sec:nonmagneticlimit}, we define the stochastic particle-field model and prove the cuplength estimate in our main theorem. In particular, we find it describes a nonmagnetic limit of Feynman--'t Hooft electrodynamics, and perfectly models nonmagnetic electrodynamics with a quantized electric field. Finally, in \autoref{sec:C0bounds}, we discuss how our proof strategy needs to be adapted for stochastic electrodynamics beyond the nonmagnetic limit and discuss the role of regularization. 
\section{Stochastic approximations of quantum mechanics}\label{sec:physicalbackground}
We use this section to introduce how stochastic methods can be used to approximate quantum mechanics up to first order in \(\hbar\). To this end, we must first discuss the Weyl--Wigner model of quantum mechanics. We also introduce Hamiltonian models of electrodynamics to be used in the rest of this paper.\par
This section is considered a mathematical adaptation of existing physical theory. References to physical literature are provided where necessary.
\subsection{Weyl--Wigner quantization}
In this subsection, we introduce Weyl--Wigner quantization and discuss some established theorems without proofs. An interested reader can find more information in a comprehensive source, such as \cite{curtrightConciseTreatiseQuantum2014}. The implementation of Weyl--Wigner quantization for specifically (free) quantum electrodynamics is treated in \cite{santosQuantumElectromagneticField2024}.\par
Consider symplectic Euclidean space \((\mathbb{R}^{2n},\omega)\) with Darboux coordinates \((x_i,p_i)_{i=1}^n\). We equip it with a smooth time-dependent Hamiltonian \(H\colon\mathbb{R}^{2n}\times\mathbb{R}\to\mathbb{R}\). The Hamiltonian determines the dynamics of \(\mathbb{R}^{2n}\) and the functions therein. In classical mechanics, the time evolution of a smooth function \(f\colon\mathbb{R}^{2n}\to\mathbb{R}\) is given by the \emph{Liouville equation}
\begin{equation}
    \frac{\mathrm{d}}{\mathrm{d}t}f(x(t),p(t)) = \{H,f\}_P \coloneqq \sum_{i=1}^n \left(\frac{\mathrm{d}H}{\mathrm{d}p_i}\frac{\mathrm{d}f}{\mathrm{d}x_i} - \frac{\mathrm{d}H}{\mathrm{d}x_i}\frac{\mathrm{d}f}{\mathrm{d}p_i}\right),
\end{equation}
where \(\{-,-\}_P\) is called the \emph{Poisson bracket}. In (matrix) quantum mechanics, one typically replaces functions with operators and the Poisson bracket with a commutator. However, the Weyl approach instead leaves the functions as is and changes the commutator to get an equivalent algebra. In the following, \(\mathcal{D}\) is the space of compactly supported smooth functions (test functions), and \(\mathcal{D}'\) the space of distributions, i.e., the dual of \(\mathcal{D}\) with respect to the inner product on \(L^2\).
\begin{theorem}[Schwartz kernel theorem \cite{schwartzTheorieNoyaux}]
    There exists a linear homeomorphism between bounded operators \(\hat{f}\colon\mathcal{D}(\mathbb{R}^n)\to\mathcal{D}'(\mathbb{R}^n)\) and \(f\) in \(\mathcal{D}'(\mathbb{R}^{2n})\) (``symbols'') given by
    \begin{equation}
        \hat{f}\psi(x) = \frac{1}{(2\pi\hbar)^n}\int_{\mathbb{R}^{2n}} f\left(\frac{x+y}{2},p\right)e^{\frac{i}{\hbar}p\cdot(x-y)}\psi(y)\ \mathrm{d}y\mathrm{d}p.
    \end{equation}
\end{theorem}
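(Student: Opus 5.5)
The plan is to reduce the statement to the classical Schwartz kernel theorem and then show that the Weyl formula is just a linear change of variables plus a partial Fourier transform, both of which are homeomorphisms of \(\mathcal{D}'(\mathbb{R}^{2n})\) onto itself (or onto \(\mathcal{S}'\), as appropriate). Here "bounded operator \(\mathcal{D}\to\mathcal{D}'\)" means a continuous linear map. The classical kernel theorem gives a linear homeomorphism \(K\mapsto \hat f_K\), \(\langle \hat f_K\psi,\varphi\rangle = \langle K,\varphi\otimes\psi\rangle\), between \(\mathcal{D}'(\mathbb{R}^n\times\mathbb{R}^n)\) and \(L(\mathcal{D},\mathcal{D}')\) with its natural topology. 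I would take this as the input. It is proved via nuclearity of \(\mathcal{D}\), i.e. \(\mathcal{D}(\mathbb{R}^n)\hat\otimes\mathcal{D}(\mathbb{R}^n)\cong\mathcal{D}(\mathbb{R}^{2n})\). So it remains to identify the Weyl symbol map \(f\mapsto K_f\) with
\begin{equation*}
    K_f(x,y)=\frac{1}{(2\pi\hbar)^n}\int f\left(\tfrac{x+y}{2},p\right)e^{\frac{i}{\hbar}p\cdot(x-y)}\,\mathrm{d}p,
\end{equation*}
and to show it is a linear homeomorphism.

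The key steps, in order, are as follows. First, write \(K_f=\Phi^*(\mathcal{F}_2^{-1}f)\). Here \(\mathcal{F}_2\) is the (\(\hbar\)-scaled) Fourier transform in the momentum variable \(p\mapsto u\), and \(\Phi(x,y)=\bigl(\tfrac{x+y}{2},x-y\bigr)\) is a linear isomorphism of \(\mathbb{R}^{2n}\). Pullback by a linear isomorphism is clearly a homeomorphism on \(\mathcal{D}'\). Second, handle the partial Fourier transform. On \(\mathcal{D}'\) it is not defined in general, so one must work with distributions that are tempered in \(p\), or simply state the result on \(\mathcal{S}'(\mathbb{R}^{2n})\). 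There, \(\mathcal{F}_2\) is a topological isomorphism by Plancherel and duality, and the kernel theorem holds verbatim for \(L(\mathcal{S},\mathcal{S}')\cong\mathcal{S}'(\mathbb{R}^{2n})\). Third, compose: \(f\mapsto\Phi^*\mathcal{F}_2^{-1}f\mapsto\hat f\). Each map is linear and bicontinuous, and the inverse is explicit:
\begin{equation*}
    f(q,p)=\int K\left(q+\tfrac{u}{2},q-\tfrac{u}{2}\right)e^{-\frac{i}{\hbar}p\cdot u}\,\mathrm{d}u,
\end{equation*}
which is the Wigner transform of the kernel. Finally, I would verify on test functions \(\psi,\varphi\) that pairing reproduces the displayed formula for \(\hat f\psi(x)\). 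This is a Fubini computation, interpreted distributionally.

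The main obstacle is the second step: making the partial Fourier transform rigorous. With \(\mathcal{D}'\) literally, the symbol space must be the Fourier image of \(\mathcal{D}'\) in one variable, which is not \(\mathcal{D}'\) itself. I would therefore prove the statement in the Schwartz setting, \(\mathcal{S}(\mathbb{R}^n)\to\mathcal{S}'(\mathbb{R}^n)\) with symbols in \(\mathcal{S}'(\mathbb{R}^{2n})\), and remark that this is the intended reading. Alternatively, for \(\mathcal{D}\to\mathcal{D}'\), one can define the symbol space as \(\mathcal{F}_2(\Phi^{-1})^*\mathcal{D}'\) with the transported topology, which makes the statement tautologically true after the kernel theorem.
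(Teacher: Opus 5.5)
The paper gives no proof of this statement. It lists it among the ``established theorems'' stated without proof and attributes it to Schwartz, so the only point of comparison is the citation itself.

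Your reduction is the standard one, and it is correct. Writing \(K_f=\Phi^*\mathcal{F}_2^{-1}f\), with \(\Phi(x,y)=\bigl(\frac{x+y}{2},x-y\bigr)\) linear and of Jacobian modulus one, reduces everything to the classical kernel theorem. Your inverse formula is exactly the Wigner transform the paper uses afterwards: for \(K(x,y)=\phi(x)\overline{\phi(y)}\) it returns \(W[\phi]\). Reading ``bounded'' as ``continuous'' is also justified, since \(\mathcal{D}\) and \(\mathcal{S}\) are bornological.

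Your passage to \(\mathcal{S}(\mathbb{R}^n)\to\mathcal{S}'(\mathbb{R}^n)\), with symbols in \(\mathcal{S}'(\mathbb{R}^{2n})\), is a necessary correction of the statement, not just a convenient reading.
\begin{itemize}
\item With \(\mathcal{D}\) and \(\mathcal{D}'\) taken literally, pairing \(\hat f\psi\) with \(\varphi\) means pairing \(f\) against the \(p\)-Fourier transform of the compactly supported function \((q,u)\mapsto\psi(q-u/2)\varphi(q+u/2)\). That transform is never compactly supported in \(p\) unless it vanishes, so for \(f=e^{|p|^2}\in\mathcal{D}'(\mathbb{R}^{2n})\) the displayed formula has no meaning.
\item Conversely, the operator with kernel \(e^{|x-y|^2}\) is a continuous map \(\mathcal{D}\to\mathcal{D}'\) but has no tempered symbol. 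This is because \(\Phi^*\mathcal{F}_2^{-1}\) maps \(\mathcal{S}'\) into \(\mathcal{S}'\), and \(e^{|x-y|^2}\) is not tempered.
\end{itemize}
So both the operator side and the symbol side must be changed, which is exactly what you do. Your alternative symbol space \(\mathcal{F}_2(\Phi^{-1})^*\mathcal{D}'\) consists of ultradistributions in \(p\), not a subspace of \(\mathcal{D}'(\mathbb{R}^{2n})\). It therefore does not rescue the statement as printed.

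One small fix: \(\mathcal{F}_2\) is a topological isomorphism of \(\mathcal{S}'(\mathbb{R}^{2n})\) because it is one of \(\mathcal{S}(\mathbb{R}^{2n})\) and you transpose. Plancherel plays no role. With the strong topologies on \(L(\mathcal{S},\mathcal{S}')\) and on \(\mathcal{S}'(\mathbb{R}^{2n})\), your composition then gives precisely the claimed linear homeomorphism.
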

This theorem essentially allows us to replace the operator algebra with an equivalent function algebra. The following properties of the linear homeomorphism follow from writing out.
\begin{proposition}
    Consider the homeomorphism induced by the Schwartz kernel theorem. The Hilbert--Schmidt norm is given by
    \begin{equation}
        \lVert\hat{f}\rVert_{HS} = \frac{1}{(2\pi\hbar)^{n/2}}\lVert f\rVert_{L^2}
    \end{equation}
    (in particular, any \(L^2\) function induces a compact operator),
    the commutator is mapped to \(i\hbar\) times the \emph{Moyal bracket},
    \begin{equation}
        [\hat{f},\hat{g}]\mapsto i\hbar\{f,g\}_M = \left.2i\sin\left(\frac{\hbar}{2}(\nabla_q\cdot\nabla_x - \nabla_p\cdot\nabla_y)\right)f(x,p)g(y,q)\right|_{x=y,p=q},
    \end{equation}
    and the probability distributions of \(\hat{f}\) in position, respectively momentum, space are given by
    \begin{equation}
        \langle x\rvert\hat{f}\lvert x\rangle = \frac{1}{(2\pi\hbar)^n}\int_{\mathbb{R}^n}f(x,p)\ \mathrm{d}p,\qquad \langle p\rvert\hat{f}\lvert p\rangle = \frac{1}{(2\pi\hbar)^n}\int_{\mathbb{R}^n}f(x,p)\ \mathrm{d}x.
    \end{equation}
\end{proposition}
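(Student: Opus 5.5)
The plan is to prove all three claims directly from the integral formula in the Schwartz kernel theorem, treating \(f\) first as a Schwartz function, where every manipulation below is an honest integral, and extending to general symbols by density and continuity at the end. The first step is to read off the integral kernel of \(\hat f\),
\begin{equation}
    K_f(x,y)=\frac{1}{(2\pi\hbar)^n}\int_{\mathbb{R}^n} f\left(\frac{x+y}{2},p\right)e^{\frac{i}{\hbar}p\cdot(x-y)}\,\mathrm{d}p,
\end{equation}
which is \((2\pi\hbar)^{-n}\) times the partial Fourier transform of \(f\) in \(p\), evaluated at the dual variable \((x-y)/\hbar\), composed with the linear change of variables \((x,y)\mapsto(u,v)=((x+y)/2,x-y)\). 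That change of variables has Jacobian \(1\).

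For the Hilbert--Schmidt norm, use \(\lVert\hat f\rVert_{HS}^2=\int\lvert K_f(x,y)\rvert^2\,\mathrm{d}x\,\mathrm{d}y\) and pass to the variables \((u,v)\). Plancherel in \(p\), with the rescaling \(v/\hbar\), gives
\begin{equation}
    \int\lvert K_f\rvert^2\,\mathrm{d}u\,\mathrm{d}v=\frac{(2\pi\hbar)^n}{(2\pi\hbar)^{2n}}\lVert f\rVert_{L^2}^2 .
\end{equation}
This is the stated identity. Compactness then follows because Hilbert--Schmidt operators are compact, and the correspondence extends isometrically from Schwartz functions to all of \(L^2\).

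For the diagonal matrix elements, set \(y=x\) in \(K_f\); the phase disappears, which gives the position formula at once. For the momentum formula, conjugate by the Fourier transform and compute
\begin{equation}
    \langle p\rvert\hat f\lvert p\rangle=(2\pi\hbar)^{-n}\int K_f(x,y)\,e^{-\frac{i}{\hbar}p\cdot(x-y)}\,\mathrm{d}x\,\mathrm{d}y .
\end{equation}
In the variables \((u,v)\), the \(v\)-integral produces a delta function that sets the inner momentum variable equal to \(p\), and what remains is the stated integral over \(u\). This uses the normalization \(\langle x\vert p\rangle=(2\pi\hbar)^{-n/2}e^{ipx/\hbar}\).

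The commutator is the main obstacle. The plan is to first derive the Moyal star product \(\widehat{f}\,\widehat{g}=\widehat{f\star g}\) by composing the kernels, \(K_{f}\circ K_g(x,z)=\int K_f(x,y)K_g(y,z)\,\mathrm{d}y\), and then identifying the result as the kernel of some symbol \(h\). Computing \(h\) means Fourier transforming in the difference variable, which leaves an oscillatory Gaussian-type integral over the intermediate points. The cleanest route is to use Fourier (plane-wave) symbols \(f=e^{i(a\cdot x+b\cdot p)}\). For these, \(\hat f\) is a Weyl operator, and the Baker--Campbell--Hausdorff formula for the Heisenberg group gives
\begin{equation}
    e^{i(a x+bp)}\star e^{i(a'x+b'p)}=e^{\frac{i\hbar}{2}(a\cdot b'-b\cdot a')}\,e^{i((a+a')x+(b+b')p)} .
\end{equation}
Up to the sign convention the paper fixes, this phase is exactly \(\exp\bigl(\tfrac{i\hbar}{2}(\nabla_q\cdot\nabla_x-\nabla_p\cdot\nabla_y)\bigr)\) acting on \(f(x,p)g(y,q)\) and then restricted to the diagonal. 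Subtracting the product in the reverse order yields \(2i\sin(\cdots)\), and general \(f,g\) follow by Fourier superposition. The delicate points are sign and \(\hbar\) bookkeeping, and justifying the exchange of integrals, which I would handle for Schwartz symbols only and then state for general symbols in the distributional sense.
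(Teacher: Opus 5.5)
Your Hilbert--Schmidt and diagonal computations are correct, and they are exactly the direct ``writing out'' the paper appeals to; the paper gives no further proof. The unit Jacobian, the Plancherel rescaling $v/\hbar$, and the factor $(2\pi\hbar)^n\delta(p'-p)$ from the $v$-integral all produce the stated constants.

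The genuine problem is the commutator step. The phase you display for the star product of plane waves has the wrong sign, and this is not a convention: the paper's kernel together with $\hat p=-i\hbar\nabla$ determines it. From the kernel one gets $\widehat{e^{i(a\cdot x+b\cdot p)}}\psi(x)=e^{ia\cdot(x+\hbar b/2)}\psi(x+\hbar b)=e^{i(a\cdot\hat x+b\cdot\hat p)}\psi(x)$. You should verify this rather than assert it, since it is where the midpoint rule enters. Moreover $[i(a\cdot\hat x+b\cdot\hat p),\,i(a'\cdot\hat x+b'\cdot\hat p)]=-i\hbar(a\cdot b'-b\cdot a')$, so BCH gives
\[
e^{i(a\cdot x+b\cdot p)}\star e^{i(a'\cdot x+b'\cdot p)}=e^{-\frac{i\hbar}{2}(a\cdot b'-b\cdot a')}\,e^{i((a+a')\cdot x+(b+b')\cdot p)} .
\]
On plane waves $\nabla_x\to ia$, $\nabla_p\to ib$, $\nabla_y\to ia'$, $\nabla_q\to ib'$. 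Hence $\nabla_q\cdot\nabla_x-\nabla_p\cdot\nabla_y$ acts as $-(a\cdot b'-b\cdot a')$, and $\exp\bigl(\tfrac{i\hbar}{2}(\nabla_q\cdot\nabla_x-\nabla_p\cdot\nabla_y)\bigr)$ reproduces precisely this corrected phase. With your sign, $f\star g-g\star f$ would equal minus the claimed $2i\sin(\cdots)$ expression, so as written your argument proves the opposite identity. The test case $f=x$, $g=p$ shows this at once: the claimed formula gives $i\hbar=[\hat x,\hat p]$, while your phase gives $x\star p-p\star x=-i\hbar$, contradicting the canonical commutation relation.

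Once the sign is fixed, your route goes through. For Schwartz $f$, write $\hat f=(2\pi)^{-2n}\int\mathcal{F}f(a,b)\,e^{i(a\cdot\hat x+b\cdot\hat p)}\,\mathrm{d}a\,\mathrm{d}b$, with $\mathcal{F}f$ the Fourier transform on $\mathbb{R}^{2n}$. This is a strongly convergent integral of unitaries with $\mathcal{F}f\in L^1$, so Fubini justifies multiplying two such superpositions. The resulting Fourier multiplier is exactly what the sine of the bidifferential operator means.

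One caveat on your closing sentence: the commutator identity cannot be stated for arbitrary distributional symbols, because two operators $\mathcal{D}\to\mathcal{D}'$ cannot in general be composed. It extends only to classes where the product exists, e.g.\ $L^2$ symbols (Hilbert--Schmidt operators compose) or polynomial symbols (the sine series terminates).
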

To convince the reader that this linear homeomorphism is the right one, we can check that \(x\) and \(p\) are mapped to the correct operators.
\begin{corollary}
    Under the linear homeomorphism from the Schwartz kernel theorem, \(x\) is mapped to \(\phi(x)\mapsto x\phi(x)\) and \(p\) is mapped to \(\phi(x)\mapsto -i\hbar\nabla\phi(x)\).
\end{corollary}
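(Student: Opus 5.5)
The plan is to substitute the two symbols directly into the Weyl formula from the Schwartz kernel theorem and evaluate the resulting oscillatory integrals distributionally. The one identity needed is
\begin{equation}
    \frac{1}{(2\pi\hbar)^n}\int_{\mathbb{R}^n} e^{\frac{i}{\hbar}p\cdot(x-y)}\ \mathrm{d}p = \delta(x-y),
\end{equation}
understood in \(\mathcal{D}'\). I will test every identity against \(\psi\in\mathcal{D}(\mathbb{R}^n)\), so Fubini and integration by parts are justified in the distributional sense.

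\emph{Position symbol.} For \(f(x,p)=x_j\), the symbol evaluated at the midpoint is \(\frac{x_j+y_j}{2}\), which does not depend on \(p\). Doing the \(p\)-integral first produces \(\delta(x-y)\). Integrating against \(\psi(y)\) then sets \(y=x\), so the midpoint becomes \(x_j\) and we get \(\hat{x}_j\psi(x)=x_j\psi(x)\).

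\emph{Momentum symbol.} For \(f(x,p)=p_j\), I write
\begin{equation}
    p_j e^{\frac{i}{\hbar}p\cdot(x-y)} = i\hbar\,\partial_{y_j} e^{\frac{i}{\hbar}p\cdot(x-y)}.
\end{equation}
Integrating by parts in \(y\) is valid because \(\psi\) has compact support, and it moves the derivative onto \(\psi\), giving \(-i\hbar\,\partial_{y_j}\psi(y)\). The \(p\)-integral then again yields \(\delta(x-y)\), and therefore \(\hat{p}_j\psi(x)=-i\hbar\,\partial_j\psi(x)\). An equivalent route is to write \(\delta\) as a Fourier integral and note that \(\int p_j e^{\frac{i}{\hbar}p\cdot(x-y)}\,\mathrm{d}p=(2\pi\hbar)^n(-i\hbar\,\partial_{x_j})\delta(x-y)\).

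The main obstacle is rigor. The \(p\)-integrals do not converge absolutely, since \(x_j\) and \(p_j\) are not in \(L^1\), so the formula must be read distributionally. I would handle this by inserting a cutoff \(e^{-\epsilon|p|^2}\), computing with Gaussian integrals, and letting \(\epsilon\to0\). The limit of the Gaussian kernel is \(\delta\) in \(\mathcal{D}'\), and \(\psi\) together with its derivatives are smooth with compact support, so the pairing converges. As a consistency check, the two results reproduce \([\hat{x}_j,\hat{p}_k]=i\hbar\delta_{jk}\), which agrees with \(i\hbar\{x_j,p_k\}_M=i\hbar\delta_{jk}\) from the preceding proposition, because the Moyal bracket of linear functions equals their Poisson bracket.
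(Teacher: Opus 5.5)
Your proposal is correct and is exactly the direct substitution of the symbols \(x_j\) and \(p_j\) into the Weyl formula that the paper means when it says these properties ``follow from writing out''; the paper gives no further detail, so your distributional justification via the cutoff \(e^{-\epsilon|p|^2}\) only adds rigor. One small caution about your side check: \(\{x_j,p_k\}_M=\delta_{jk}\) is right when computed from the paper's sine formula, but the paper orders its \(\{-,-\}_P\) so that \(\{x_j,p_k\}_P=-\delta_{jk}\), so justify that identity from the Moyal formula directly rather than by appeal to the Poisson bracket.
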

Since states (i.e., elements of \(L^2\) with norm \(1\)) \(\phi\) induce projection operators \(\phi\langle -,\phi\rangle = \lvert\phi\rangle\langle\phi\rvert\), we can also induce its corresponding distribution.
\begin{definition}
    Given a state \(\phi\), its corresponding distribution is called the \emph{Wigner function}, and is given by
    \begin{equation}
        W[\phi](x,p) = \int_{\mathbb{R}^n}\phi\left(x+\frac{y}{2}\right)\overline{\phi\left(x-\frac{y}{2}\right)}e^{-\frac{i}{\hbar}p\cdot y}\ \mathrm{d}y.
    \end{equation}
\end{definition}
One then derives the following properties of the Wigner function.
\begin{proposition}
    For \(\phi(x)\) a state and \(\tilde{\phi}(p) = \frac{1}{(2\pi\hbar)^{n/2}}\int_{\mathbb{R}^n}\phi(x)e^{-\frac{i}{\hbar}p\cdot x}\ \mathrm{d}x\) its normalized Fourier transform, the Wigner function satisfies the following properties:
    \begin{enumerate}
        \item \(W[\phi](x,p)\) is real.
        \item The Wigner function can equivalently be defined on momentum space
        \begin{equation}
            W[\phi](x,p) = \int_{\mathbb{R}^n}\tilde{\phi}\left(p+\frac{q}{2}\right)\overline{\tilde{\phi}\left(p-\frac{q}{2}\right)}e^{\frac{i}{\hbar}q\cdot x}\ \mathrm{d}q.
        \end{equation}
        \item Integrating the Wigner function with respect to momentum or position gives the position or momentum space probability distribution, respectively:
        \begin{equation}
            \lvert\phi(x)\rvert^2 = \frac{1}{(2\pi\hbar)^n}\int_{\mathbb{R}^n}W[\phi](x,p)\ \mathrm{d}p, \qquad \lvert\tilde{\phi}(p)\rvert^2 = \frac{1}{(2\pi\hbar)^n}\int_{\mathbb{R}^n}W[\phi](x,p)\ \mathrm{d}x.
        \end{equation}
        \item For a distribution \(f\) and its corresponding operator \(\hat{f}\),
        \begin{equation}
            \langle\phi\rvert\hat{f}\lvert\phi\rangle = \frac{1}{(2\pi\hbar)^n}\langle W[\phi],f\rangle_{L^2}.
        \end{equation}
    \end{enumerate}
\end{proposition}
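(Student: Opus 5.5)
The plan is to verify the four properties one at a time, directly from the definition of \(W[\phi]\). Throughout, I first take \(\phi\) in the Schwartz class and \(f\) a Schwartz function (or a test function). This makes every exchange of integrals legitimate by Fubini. It also lets the identity \(\int_{\mathbb{R}^n} e^{-\frac{i}{\hbar}p\cdot y}\,\mathrm{d}p = (2\pi\hbar)^n\delta(y)\) be read as the Fourier inversion formula. At the end I extend to general states \(\phi\in L^2\) and distributions \(f\) by density and continuity.

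For (1), take the complex conjugate of the defining integral and substitute \(y\mapsto -y\). This swaps the two factors \(\phi(x\pm y/2)\) and turns \(e^{\frac{i}{\hbar}p\cdot y}\) back into \(e^{-\frac{i}{\hbar}p\cdot y}\), so \(\overline{W[\phi]}=W[\phi]\).

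For (2), insert \(\phi(z)=\frac{1}{(2\pi\hbar)^{n/2}}\int\tilde\phi(k)e^{\frac{i}{\hbar}k\cdot z}\,\mathrm{d}k\) for both factors, with variables \(k\) and \(k'\). The \(y\)-integral produces \((2\pi\hbar)^n\delta\bigl(\tfrac{k+k'}{2}-p\bigr)\), and what remains is an integral over \(k\) and \(k'\). The change of variables \(k=p+q/2\), \(k'=p-q/2\) (Jacobian \(1\)) then gives the stated momentum-space formula.

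For (3), integrate the position-space definition in \(p\). The delta \((2\pi\hbar)^n\delta(y)\) appears and leaves \((2\pi\hbar)^n|\phi(x)|^2\). Doing the same to the momentum-space formula from (2), now integrating in \(x\), gives \((2\pi\hbar)^n|\tilde\phi(p)|^2\).

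For (4), write \(\langle\phi\rvert\hat f\lvert\phi\rangle=\int\overline{\phi(x)}\,\hat f\phi(x)\,\mathrm{d}x\) and use the kernel formula from the Schwartz kernel theorem. Change variables to the midpoint \(u=(x+y)/2\) and the difference \(s=x-y\), which has Jacobian \(1\). The result is
\begin{equation*}
\frac{1}{(2\pi\hbar)^n}\int f(u,p)\,\overline{\phi(u+s/2)}\,\phi(u-s/2)\,e^{\frac{i}{\hbar}p\cdot s}\,\mathrm{d}s\,\mathrm{d}u\,\mathrm{d}p .
\end{equation*}
Substituting \(s=-y\) turns the inner \(s\)-integral into \(\overline{W[\phi](u,p)}\). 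By (1) this equals \(W[\phi](u,p)\), which gives \(\frac{1}{(2\pi\hbar)^n}\langle W[\phi],f\rangle_{L^2}\) whichever convention is used for the conjugation in the inner product.

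The main obstacle is rigor rather than algebra. The delta-function steps and the interchanges of integration are only formal for general \(\phi\in L^2\) and \(f\in\mathcal{D}'\). I would handle this as follows.

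1. Prove everything for Schwartz \(\phi\) and \(f\). There, (2) and (3) are genuine Fourier inversion statements; the identities in (3) hold in the sense of (for \(\phi\in L^2\), conditionally convergent) iterated integrals, since \(W[\phi]\) need not be integrable.
2. Show that \(\phi\mapsto W[\phi]\) is continuous from \(L^2\) into \(L^2(\mathbb{R}^{2n})\) and into \(\mathcal{S}'\). This follows from a Plancherel computation: it equals \((2\pi\hbar)^n\) times the Fourier transform in the second variable of \((x,y)\mapsto\phi(x+y/2)\overline{\phi(x-y/2)}\), with \(\lVert W[\phi]\rVert_{L^2}=(2\pi\hbar)^{n/2}\lVert\phi\rVert_{L^2}^2\).
3. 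Extend (1)--(3) to \(L^2\) by density, interpreting the identities in (3) distributionally and a.e.
4. Extend (4) by density in \(\phi\). For \(f\in\mathcal{D}'\), read the pairing \(\langle W[\phi],f\rangle\) as the distributional pairing, which requires \(\phi\) regular enough that \(W[\phi]\) is a test function; for general distributions \(f\), only \(\phi\) in the domain of \(\hat f\) makes sense.
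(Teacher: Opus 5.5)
The paper gives no proof of this proposition; it quotes it among the established Weyl--Wigner facts, stated to follow from writing out the definitions. Your direct computation is that standard derivation, all four identities come out correctly, and the argument is rigorous for Schwartz \(\phi\) and \(f\).

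A few details need fixing.

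\begin{itemize}
\item \emph{Conjugation step in (4).} The inner integral \(\int\overline{\phi(u+s/2)}\,\phi(u-s/2)\,e^{\frac{i}{\hbar}p\cdot s}\,\mathrm{d}s\) is already \(\overline{W[\phi](u,p)}\). Substituting \(s=-y\) turns it into \(W[\phi](u,p)\) itself, not its conjugate. This is harmless, by (1).

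\item \emph{``Whichever convention'' in (4).} This holds only for real \(f\). You computed \(\int W[\phi]\,f\), but a pairing that conjugates the second slot gives \(\int W[\phi]\,\overline{f}\). Since the left-hand side is complex-linear in \(f\), the pairing has to be read as linear in \(f\).

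\item \emph{Test functions in your rigor plan.} For \(\phi\neq0\), \(W[\phi]\) is never in \(\mathcal{D}(\mathbb{R}^{2n})\). Compact support in \(x\) forces, via (3), compact support of \(\phi\). Then \(W[\phi](x,\cdot)\) is the Fourier transform of the compactly supported function \(y\mapsto\phi(x+y/2)\overline{\phi(x-y/2)}\), so it cannot be compactly supported unless it vanishes. The pairing in (4) should therefore be taken with \(\phi\in\mathcal{S}(\mathbb{R}^n)\) and \(f\in\mathcal{S}'(\mathbb{R}^{2n})\), where \(W[\phi]\in\mathcal{S}(\mathbb{R}^{2n})\). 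This is also the natural setting for the Weyl kernel formula itself, which Fourier-transforms \(f\) in \(p\) and so needs \(f\) tempered.

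\item \emph{Convergence in (3).} Nothing you invoke gives conditional convergence of \(\int W[\phi](x,p)\,\mathrm{d}p\) for general \(\phi\in L^2\). That would be a pointwise Fourier-inversion statement for an \(L^1\) function. For (3) with \(L^2\) states, keep to the weak/a.e.\ reading from your step 3.
\end{itemize}
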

\begin{remark}
    While it may be tempting to treat the Wigner function as a probability distribution over phase space, it is actually a \emph{quasi}-probability distribution. This means that it is allowed to be negative in places. For example, the first excitation of a one-dimensional harmonic oscillator,
    \begin{equation*}
        \psi_1(x) = \sqrt{2}\left(\frac{m^3\omega^3}{\pi\hbar^3}\right)^{1/4}xe^{-\frac{m\omega x^2}{2\hbar}}
    \end{equation*}
    has Wigner function
    \begin{equation*}
        W[\psi_1](x,p) = 4\left(\frac{m\omega x^2}{\hbar} + \frac{p^2}{m\omega\hbar} - \frac{1}{2}\right)e^{-\frac{m\omega x^2}{\hbar} - \frac{p^2}{m\omega\hbar}},
    \end{equation*}
    which is negative at \((0,0)\).
\end{remark}
The time evolution of a function is given by
\begin{equation}
    \frac{\mathrm{d}f}{\mathrm{d}t} = \{H,f\}_M = \{H,f\}_P + \mathcal{O}(\hbar^2).
\end{equation}
In particular, we notice that the classical Poisson bracket agrees up to first order in \(\hbar\), rather than to zeroth order, as expected in the classical case. Thus, we can use classical methods to approximate quantum mechanics up to first order \emph{if} we can choose our initial values to also agree up to first order (in \autoref{sec:SED}, we will present an argument that this is no problem in stochastic electrodynamics). There is, however, one essential property of quantum theory which is not captured by ordinary classical mechanics: quantum fluctuations. This means that, even in a vacuum, there are random excitations in the field, with expectation value zero, but nonzero variance. We have experimentally verified the existence of such fluctuations through the Casimir effect. Therefore, a theory that aims to describe quantum theory up to first order should somehow model these quantum fluctuations.\par
It is rather straightforward to generalize Weyl--Wigner quantization to an arbitrary Hilbert space \(X\) instead of \(\mathbb{R}^{2n}\). In this case, we consider a Moyal bracket over \(\mathcal{D}'(X\oplus X)\), which is defined in the same way as above. This is how we quantize fields: if \(X\) is, for example, \(L^2(M,\mathbb{R})\), then this description tells us how to quantize scalar fields. An explicit description of the Wigner function of fields, as induced from their Heisenberg formulation, can be found in \cite[sec. 2.2]{santosAnalogyStochasticElectrodynamics2022}.\par
Due to the setup of Weyl--Wigner quantization, it is also fairly straightforward to generalize the concept to arbitrary (Poisson) manifolds \((M,\{-,-\}_P)\). We simply require there to be some suitable operator \(\{-,-\}_M\) on \(C^\infty(M)[[i\hbar]]\) which agrees with the Poisson bracket in zeroth order. One calls this structure a \emph{deformation quantization}, and it turns out that one exists for all Poisson manifolds, see \cite{kontsevichDeformationQuantizationPoisson2003}. In this paper, we will only ever work with the Poisson bracket, so discussing deformation quantization more is not within scope.\par
In modern quantum field theory, one commonly considers quantum mechanics perturbatively.  That is to say, we calculate quantities as power series in \(\hbar\) and coupling constants. However, we also know that there are some nonperturbative effects, evidenced by the fact that the power series diverges for any nonzero \(\hbar\). Up to these effects, however, perturbative quantum theories, such as quantum electrodynamics (QED), are ``precise''. Since the Poisson bracket agrees with the Moyal bracket up to first order, we can conclude that the Liouville equation must agree with perturbative quantum theory up to first order.
\subsection{Hamiltonian electrodynamics}\label{sec:hamiltonianED}
In this section, we fix our Hamiltonian models of electrodynamics (in Coulomb and Feynman--'t Hooft gauge) for the rest of this work. Electromagnetism fundamentally takes place in three dimensions, so we will only consider a three-torus, and \(n\) particles that live on it, position denoted by \(q_i \in \mathbb{T}^3\) or \(q = (q_i)_{i=1}^n \in \mathbb{T}^{3n}\), with corresponding momenta \(p = (p_i)_{i=1}^n\), and masses \((m_i)_{i=1}^n\). We assume the particles have an electrical charge distribution \(\rho_i(x-q_i)\). Note that particles in this model cannot change shape or rotate.\par
First, as standard in stochastic electrodynamics, we will work in Coulomb gauge, i.e., we have an electromagnetic vector potential field \(A\colon\mathbb{T}^3\to\mathbb{R}^3\) with \(\nabla\cdot A=0\) and its momentum conjugate field \(\Pi\), and the electric scalar potential is fully described by the Coulomb potential. For analytical reasons (see \autoref{rmk:reasonforH12}), we take \(A\in \dot{H}^{1/2}_\perp(\mathbb{T}^3,\mathbb{R}^3) = \{A\in \dot{H}^{1/2}_\perp(\mathbb{T}^3,\mathbb{R}^3)\mid \nabla\cdot A = 0\}\), where \(\dot{H}^k\) denotes the homogeneous Sobolev space
\begin{equation}
    \dot{H}^k(A,B) \coloneqq \{f\in H^{-\infty}(A,B)\mid (-\Delta)^{k/2}f\in L^2(A,B)\}/\ker{(-\Delta)^{k/2}},
\end{equation}
which is a Hilbert space with inner product \(\langle f,g\rangle_{\dot{H}^k} = \langle(-\Delta)^{k/2}f,(-\Delta)^{k/2}g\rangle_{L^2}\). Finally, due to how prevalent integration with respect to \(\rho_i\) will be, we introduce the following shorthand,
\begin{equation}
    \langle f\rangle_{q_i} \coloneqq \langle f(x),\rho_i(x-q_i)\rangle_{L^2}.
\end{equation}
In the point-particle limit \(\rho_i\to \delta\), we get \(\langle f\rangle_{q_i} = f(q_i)\), so one can think of \(\langle f\rangle_{q_i}\) as ``the (weighted average) value of \(f\) at particle \(i\)''.
The Hamiltonian is
\begin{equation}\begin{multlined}
    H(q,p,A,\Pi) = \underbrace{\sum_{i=1}^n\frac{p_i^2}{2m_i} - \sum_{j>i}\frac{1}{\epsilon_0}\langle\Delta^{-1}\rho_j(x-q_j)\rangle_{q_i}}_{H_\text{part}}\\
    + \underbrace{\sum_{i=1}^n\left(-\frac{\sqrt{\mu_0}}{m_i}p_i\cdot\langle A\rangle_{q_i} + \frac{\mu_0}{2m_i}\langle A\rangle_{q_i}^2\right)}_{H_\text{int}} + \underbrace{\frac{1}{2}\lVert (A,c\Pi)\rVert_{\dot{H}^1}^2}_{H_\text{field}}.
\end{multlined}\end{equation}
Note that our vector potential differs by a constant \(\sqrt{\mu_0}\) from the vector potential in physical literature, in order to realize \(\mu_0\) as the coupling between the particles and the field\footnote{Another reason why we choose the units of \(A\) (and later \(\varphi\)) in this way is that we have effectively decoupled the speed of light \(c\) from \(\epsilon_0\) and \(\mu_0\), i.e., we never use (or need) that \(c = \frac{1}{\sqrt{\mu_0\epsilon_0}}\). This means that we may vary the value of \(\mu_0\), as we will do in \autoref{sec:nonmagneticlimit}, without affecting \(c\).}. The kinetic part of the particle Hamiltonian together with the interaction Hamiltonian is \(\sum_{i=1}^n \frac{(p_i - \sqrt{\mu_0}\langle A\rangle_{q_i})^2}{2m_i}\), the electromagnetic kinetic Hamiltonian, and the potential sector of the particle Hamiltonian is the Coulomb potential, since
\begin{equation*}
    -\frac{1}{\epsilon_0}\langle\Delta^{-1}\rho_j(x-q_j)\rangle_{q_i} = \int_{\mathbb{T}^3\times\mathbb{T}^3} \frac{\rho_i(x-q_i)\rho_j(y-q_j)}{4\pi\epsilon_0\lvert x-y\rvert}\ \mathrm{d}x\mathrm{d}y.
\end{equation*}
The second-order equations of motion (in position space) are as expected for electromagnetism in Coulomb gauge,
\begin{gather}
    \square A = -\sqrt{\mu_0}J_\perp,\\
    \ddot{q}_i = \frac{1}{m_i}\left\langle \sqrt{\mu_0}\dot{q}_i\times(\nabla\times A) + \sum_{j\neq i}\frac{1}{\epsilon_0}\nabla\Delta^{-1}\rho_j(x-q_j) - \sqrt{\mu_0}\dot{A} \right\rangle_{q_i},
\end{gather}
where \(\square\coloneqq -\frac{1}{c^2}\frac{\partial^2}{\partial t^2} + \Delta\) is the d'Alembertian or wave operator, and \(J_\perp\) is the projection of the current density to \(\dot{H}^\bullet_\perp\):
\begin{equation}
    J_\perp(x) =  (\Delta - \nabla\nabla\cdot)\Delta^{-1}\sum_{i=1}^n\dot{q}_i\rho_i(x-q_i) = -\frac{1}{4\pi}(\Delta - \nabla\nabla\cdot)\sum_{i=1}^n\dot{q}_i\int\frac{\rho_i(y-q_i)}{|x-y|}\ \mathrm{d}y.
\end{equation}\par
Let us move on to electrodynamics in Feynman--'t Hooft gauge. The Hamiltonian is given by
\begin{equation}\begin{multlined}
    H(q,p,\varphi,\pi,A,\Pi) = \sum_{i=1}^n\left(\frac{(p_i - \sqrt{\mu_0}\langle A\rangle_{q_i})^2}{2m_i} + \frac{1}{\sqrt{\epsilon_0}}\langle\varphi\rangle_{q_i}\right)\\
    + \frac{1}{2}\lVert(A,c\Pi)\rVert_{\dot{H}^1}^2 - \frac{1}{2}\lVert (\varphi,c\pi)\rVert_{\dot{H}^1}^2,
\end{multlined}\end{equation}
where \(\varphi\in\dot{H}^{1/2}(\mathbb{T}^3,\mathbb{R})\) is a scalar field (differing by a constant of \(\frac{1}{\sqrt{\epsilon_0}}\) from physical literature) with momentum conjugate \(\pi\), and \(A,\Pi\in\dot{H}^{1/2}(\mathbb{T}^3,\mathbb{R}^3)\). The second-order equations of motion are
\begin{gather}
    \square{A} = -\sqrt{\mu_0}J,\quad \square{\varphi} = -\frac{1}{\sqrt{\epsilon_0}}\rho,\\
    \ddot{q}_i = \frac{1}{m_i}\left\langle \sqrt{\mu_0}\dot{q}_i\times(\nabla\times A) - \frac{1}{\sqrt{\epsilon_0}}\nabla\varphi - \sqrt{\mu_0}\dot{A}\right\rangle_{q_i}.
\end{gather}
Here, \(J = \sum_{i=1}^N\dot{q}_i\rho_i(x-q_i)\) is the current density and \(\rho(x) = \sum_{i=1}^N\rho_i(x-q_i)\) is the charge density\footnote{Usually, in Feynman--'t Hooft gauge, one would expect the Nakanishi--Lautrup field \(B \coloneqq -\nabla\cdot A - \frac{1}{c}\dot{\varphi}\). It is still there, but hidden; we can derive from Hamilton's equations that \(B = \sqrt{-\Delta}\pi - \nabla\cdot A\).}. 
\subsection{Stochastic electrodynamics}\label{sec:SED}
In this section, we introduce how stochastic electrodynamics (SED) makes classical fields stochastic. SED is a classical approximation to quantum electrodynamics which models vacuum fluctuations of the electromagnetic field as a stochastically determined initial condition known as the zero-point field. In particular, the vacuum fields are real --- not virtual, as is generally assumed. A similar approach was already employed by Einstein and Hopf in \cite{einsteinStatistischeUntersuchungBewegung1910}, but study in the field really took off with Marshall's work in \cite{marshallRandomElectrodynamics1963}.\par
A physical source discussing the construction of SED is \cite{goedeckeStochasticElectrodynamicsStochastic1983}, and an accessible introduction is given in \cite[ch. 10]{cettoQuantumMechanicsPhysical2025}. We will discuss the construction in mathematical terms and high generality, to make the strategy applicable to other field theories.\par
In SED, particles are described by a probability distribution in phase space, and behave more or less classically. However, we do have a stochastic zero-point field. To see where the stochastic part comes in, consider a field \(A\) whose equation of motion is given by
\begin{equation}
    B A(q,t) = \text{(source terms)},
\end{equation}
where \(B\) is some operator with nontrivial kernel (in electrodynamics, \(B\) would be \(\square\)). If we fix some off-shell sector \(\ker{B}^\perp\), its dynamics is determined uniquely by the behavior of the (classical) particles. However, the free part, \(\ker{B}\), is not fixed by the equation of motion, and so may be chosen freely (in electrodynamics, this sector corresponds to radiation). This choice corresponds to a boundary condition: given some initial values of the particle, the initial choice of free field determines the entire dynamics. Classically, the boundary value is often set to zero, arguing that fields without sources --- in other words, in a vacuum --- should vanish. In SED, we employ a random distribution (a Gaussian one, to be precise) in the free sector to account for the zero-point field.\par
In \cite{santosAnalogyStochasticElectrodynamics2022}, it is shown that SED is equivalent to nonrelativistic QED up to first order in \(\hbar\), \emph{given that the initial conditions for the particles agree}. This is in reference to the fact that in SED, we restrict ourselves to probability distributions instead of quasi-probability distributions. However, in \cite{marshallRandomElectrodynamics1963}, it is shown for the harmonic oscillator that the thermal state, a thermodynamic mixture of quantum states, will be a probability distribution, and hence lies in the realm of SED. Marshall furthermore states that Wigner functions which are not probability distributions are to be considered unphysical. This discussion is however not directly relevant to this paper.
\begin{remark}
    It is important to note that the possible initial conditions of QED (up to first order), i.e., quasi-probability distributions, \emph{include} all possible initial conditions of SED. Therefore, by the above results, any (periodic) solution of SED is automatically a periodic solution of QED (up to first order). In this paper, we prove the existence of the prior, which implies the existence of the latter.
\end{remark}
Beyond just an approximation of QED, SED is sometimes posed as a realistic interpretation of quantum mechanics, as discussed in detail in \cite{santosStochasticElectrodynamicsInterpretation2020,santosStochasticInterpretationQuantum2022,santosAnalogyStochasticElectrodynamics2022}. Furthermore, there is also work on SED as an underlying theory behind QED, laid out in \cite{delapenaEmergingQuantumPhysics2015,cettoQuantumMechanicsPhysical2025}. Both lie outside the scope of this paper, and we will simply focus on SED as the closest classical approximation to QED, as per \cite{boyerStochasticElectrodynamicsClosest2019}.\par
In general stochastic particle-field theories, we will take the Hamiltonian to be
\begin{equation}
    H = H_\text{part} + H_\text{int} + H_\text{field},
\end{equation}
where \(H_\text{part},H_\text{field}\) describe (the propagator of) the particles and field, respectively, and \(H_\text{int}\) describes the interaction between the particles and field (particle interactions should be put in \(H_\text{part}\)). The exact form of each of these depends on the precise model we are working with (how many particles, their shape, etc.), but the field Hamiltonian is always assumed to be quadratic. Therefore, as noted in \cite{santosAnalogyStochasticElectrodynamics2022}, since the Moyal bracket depends only on odd derivatives, the quantum equation of motion simplifies to
\begin{equation}
    \{H,f\}_M = \{H_\text{part}+H_\text{int},f\}_M + \{H_\text{field},f\}_P.\label{eq:moyalSimplified}
\end{equation}
This leads us to the following result.
\begin{proposition}\label{prop:sedisexact}
    If both the particle and interaction Hamiltonian are at most quadratic, then the Poisson bracket is equal to the Moyal bracket in all orders.\par
    Furthermore, given an interaction Hamiltonian which is at most linear in the field, then the Poisson bracket is equal to the Moyal bracket in the model with a classical particle and a quantum field (such as the Nelson model).\par
    In particular, in each of the above cases, if the field is stochastically distributed to account for vacuum fluctuations, the stochastic model is equivalent to the corresponding (nonrelativistic) quantum theory, up to initial conditions for the particles.
\end{proposition}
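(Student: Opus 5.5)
The plan is to expand the Moyal bracket as a series in \(\hbar\) and check which terms survive. Write the bidifferential operator \(\Lambda \coloneqq \nabla_q\cdot\nabla_x - \nabla_p\cdot\nabla_y\) from the Moyal bracket formula. Expanding the sine, \(i\hbar\{f,g\}_M = 2i\sum_{k\ge 0}\frac{(-1)^k}{(2k+1)!}\left(\frac{\hbar}{2}\right)^{2k+1}\Lambda^{2k+1}f(x,p)g(y,q)\big|_{x=y,p=q}\). The \(k=0\) term is exactly \(i\hbar\{f,g\}_P\). Every higher term involves derivatives of total order \(2k+1\ge 3\) on each factor. By \eqref{eq:moyalSimplified}, and since \(H_\text{field}\) is quadratic, only \(H_\text{part}+H_\text{int}\) needs attention.

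For the first claim, take \(f=H_\text{part}+H_\text{int}\) at most quadratic in all phase-space variables. Each term with \(k\ge1\) applies \(\Lambda^{2k+1}\), hence at least third-order derivatives, to \(f\), so it vanishes identically. The Moyal bracket of \(H\) with any observable therefore equals the Poisson bracket in all orders. I would phrase this for the dynamically relevant bracket \(\{H,\cdot\}_M\), which is the one the statement concerns. Because a Hilbert space phase space appears, the sine expansion should be understood as a formal power series in \(\hbar\), acting on \(\mathcal{D}'(X\oplus X)\) mode by mode. For a polynomial \(H\) of degree \(\le 2\) the series terminates, so no convergence question arises.

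For the second claim, a classical particle means its phase-space variables are not quantized. The Moyal deformation then acts only in the field directions, so \(\Lambda\) involves only field derivatives, while the particle variables enter through the ordinary Poisson bracket. If \(H_\text{int}\) is at most linear in the field variables, and \(H_\text{part}\) carries no field dependence, then every third- or higher-order field derivative of \(H_\text{part}+H_\text{int}\) vanishes. Combining this with \eqref{eq:moyalSimplified}, again all \(k\ge1\) terms drop out. The Nelson model, with coupling \(\langle\varphi\rangle_{q}\), is a direct instance.

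For the final claim, in both cases the Liouville equation for the Wigner function coincides with the classical Liouville equation. A Gaussian field distribution is a genuine probability distribution, and it is the Wigner function of the quantum vacuum, which is a Gaussian state. Evolving it classically therefore reproduces the quantum evolution exactly, provided the particle initial data agree. The main obstacle I expect is making the ``classical particle, quantum field'' hybrid bracket precise and justifying the expansion on an infinite-dimensional space. I would define the hybrid bracket directly as the Moyal bracket in the field variables plus the Poisson bracket in the particle variables. I would then argue truncation of the series term by term, without claiming convergence in general.
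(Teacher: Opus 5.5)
Your treatment of the first claim (for \(k\ge 1\), \(\Lambda^{2k+1}\) puts at least three derivatives on \(H_\text{part}+H_\text{int}\), and \(H_\text{field}\) is handled by \eqref{eq:moyalSimplified}) is exactly the paper's argument. Your remarks on the last claim agree with the paper's discussion of the Gaussian zero-point field. The problem is the second claim. You \emph{define} the classical-particle/quantum-field model so that the deformation acts only in field directions, i.e.\ you use \(\Lambda_{\text{part}}+\frac{2}{\hbar}\sin\bigl(\frac{\hbar}{2}\Lambda_{\text{field}}\bigr)\), where \(\Lambda=\Lambda_{\text{part}}+\Lambda_{\text{field}}\). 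That removes by fiat the terms the paper's proof is about. Treating the particle classically in the full series \(\frac{2}{\hbar}\sin\bigl(\frac{\hbar}{2}(\Lambda_{\text{part}}+\Lambda_{\text{field}})\bigr)\) means keeping everything at most linear in \(\Lambda_{\text{part}}\). This yields \(\frac{2}{\hbar}\sin\bigl(\frac{\hbar}{2}\Lambda_{\text{field}}\bigr)+\Lambda_{\text{part}}\cos\bigl(\frac{\hbar}{2}\Lambda_{\text{field}}\bigr)\), the usual quantum--classical bracket written in Wigner variables. At order \(\hbar^2\) it contains, besides three field derivatives of \(H\), terms with one particle derivative and two field derivatives of \(H\). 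These are precisely the two types the paper says must vanish. Because \(H_\text{int}\) necessarily depends on the particle, the mixed type does not vanish for interactions quadratic in the field. For instance, \(\frac{\mu_0}{2m_i}\langle A\rangle_{q_i}^2\) has a nonzero derivative of the form \(\partial_{q_i}\partial_A^2\), since \(\langle A\rangle_{q_i}\) depends on \(q_i\) through \(\rho_i(x-q_i)\).

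A telltale sign is that your argument never genuinely uses the linearity hypothesis. ``At most quadratic in the field'' would kill third field derivatives just as well, whereas in the paper linearity is exactly what is needed to kill the mixed terms. The repair is short: keep the \(\Lambda_{\text{part}}\Lambda_{\text{field}}^{2k}\) contributions. Since \(H_\text{part}\) is field-independent and \(H_\text{int}\) is at most linear in the field, every derivative of \(H_\text{part}+H_\text{int}\) containing two or more field derivatives vanishes. Hence for \(k\ge1\) both the \(\Lambda_{\text{field}}^{2k+1}\) and the \(\Lambda_{\text{part}}\Lambda_{\text{field}}^{2k}\) terms drop out; for \(H_\text{field}\) they vanish as well, since it is quadratic and particle-independent. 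What remains is \(\Lambda_{\text{part}}+\Lambda_{\text{field}}\), the Poisson bracket.
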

\begin{proof}
    The first part of the proposition directly follows from equation \eqref{eq:moyalSimplified}. As for the second part, consider that terms in the interaction Hamiltonian must depend in some way on the particle. Therefore, the third order derivatives which must vanish will look like either three field-derivatives or one particle-derivative together with two field-derivatives. To make sure they vanish, the field-dependence must be at most linear.
\end{proof}
Let us return to the specific case of SED. The free sector of the field admits the following Fourier expansion,
\begin{equation}
    A_0(x,t) = \sum_{k\in\mathbb{Z}^3\setminus\{0\}}\frac{\pi}{L} \sqrt{\frac{c}{(2\pi)^3\lvert k\rvert }}\left(\alpha_ke^{\frac{2\pi i}{L}(-\lvert k\rvert ct + k\cdot q)} + \alpha_k^*e^{\frac{2\pi i}{L}(\lvert k\rvert ct-k\cdot q)}\right),
\end{equation}
where \(\alpha_k\) is perpendicular to \(k\), denoted \(\alpha_k\in k^\perp\subseteq\mathbb{R}^3\), and has the same units as \(\sqrt{\hbar}\). Each of these is distributed (complex) normally,
\begin{equation}
    \alpha_k \sim \mathcal{CN}_{k^\perp}(\mu(k), 2\sigma(\lvert k\rvert)^2),
\end{equation}
where \(\mu\colon\mathbb{Z}^3\setminus\{0\}\to\mathbb{C}^3\) such that \(\mu(k)\cdot k=0\) and \(\sigma\colon\mathbb{R}_{>0}\to \mathbb{R}_{\geq 0}\). In stochastic electrodynamics, as an approximation to quantum mechanics, \(\mu(k)=0\) and \(\sigma(\lvert k\rvert) = \sqrt{\hbar}\), in order to accurately model the Casimir effect. Note that we have excluded the \(k=0\) portion to avoid division by zero, and because it has no impact on the dynamics.\par
Our model is applicable in more generality than just stochastic electrodynamics. It may be used for any model where the amplitudes of the free wave modes are distributed normally. Such models are known as Gaussian random fields (c.f. \cite{gudderGaussianRandomFields1978}), which have many important applications in, for example, cosmology (\cite{bardeenStatisticsPeaksGaussian1986}), geostatistics (\cite{diggleModelbasedGeostatistics1998}), meteorology (\cite{minakovAcousticWaveformInversion2017}) and radiology (\cite{telschowPreciseFWERControl2024}). They are also central to quantum optics, see \cite{funaiGaussianStatesQuantum2025}, due to the prevalence of harmonic oscillators.\par
If the particles follow \(T\)-periodic trajectories, for \(\frac{c^2T^2}{L^2}\) Diophantine (to be defined below), then the free sector of \(J_\perp\) is zero. In other words, \(\square A_0 = 0\), and no free radiation is produced by the particles. Furthermore,
\begin{equation}
    \Pi_0(x,t) = \sum_{k\in\mathbb{Z}^3\setminus\{0\}}\frac{\pi}{L} \sqrt{\frac{1}{(2\pi)^3\lvert k\rvert c}}\left(-i\alpha_ke^{\frac{2\pi i}{L}(-\lvert k\rvert ct + k\cdot q)} + i\alpha_k^*e^{\frac{2\pi i}{L}(\lvert k\rvert ct-k\cdot q)}\right).
\end{equation}
Therefore, \(A_0\) and \(\Pi_0\) represent the real, respectively imaginary, part of one complex vector field,
\begin{equation}
    A_0 + ic\Pi_0 = \sum_{k\in\mathbb{Z}^3\setminus\{0\}}\frac{2\pi}{L} \sqrt{\frac{c}{(2\pi)^3\lvert k\rvert }}\alpha_ke^{\frac{2\pi i}{L}(-\lvert k\rvert ct + k\cdot q)}\in\dot{H}^{1/2}(\mathbb{T}^3,\mathbb{C}^3).
\end{equation}
From this point on, we will treat the homogeneous field as this complex field and denote it \((A_0,c\Pi_0)\).\par
One of the major (possible) properties of stochastic electrodynamics is that of a stable classical atom. Famously, the classical Rutherford atom model is unstable; as the electron orbits around the nucleus, it is accelerating and so, by the laws of electrodynamics, produces radiation. However, since radiation has positive energy, the electron must in turn lose some kinetic energy. Eventually, the electron must collide with the nucleus. Of course, in reality, this is unphysical: atoms are stable --- otherwise, all matter would be in a constant state of nuclear decay. We needed quantum theory to solve this problem, first through the Bohr model, and later with the quantum mechanical atom. In stochastic electrodynamics, we take the same Rutherford model, but the addition of a stochastic vacuum field adds energy back into the electron. As in \cite{boyerRandomElectrodynamicsTheory1975}, one can then heuristically argue that a stable solution must exist, but no rigorous proof exists.\par
In mathematics, we can prove the existence of periodic solutions in some classical theories using Floer theory. Floer theory has already been used in \cite{fabertCuplengthEstimatesPeriodic2023} to prove the existence of periodic orbits in particle-field systems with fixed zero-point field, and in \cite{fabertCuplengthEstimatesTimeperiodic2024} for periodic orbits of stochastic processes. A natural question is then whether it is possible to extend Floer theory to stochastic particle-field theories, which is what the rest of this paper is dedicated to.
\section{Periodic Stochastic Orbits in the Nonmagnetic Limit}\label{sec:nonmagneticlimit}
To describe how Floer methods may be applied to stochastic field theories, we first consider a simpler particle-field model. The model consists of \(n\) particles that live on the torus \(\mathbb{T}^d = \mathbb{R}^d/(L\mathbb{Z})^d\) with positions \(q = (q_i)_{i=1}^n\) of masses \((m_i)_{i=1}^n\) and charge distribution \(\rho_i(x-q_i)\), and a scalar field \(\varphi(x)\in \dot{H}^{1/2}(\mathbb{T}^d,\mathbb{R})\). Their momentum conjugates are \(p = (p_i)_{i=1}^n\) and \(\pi(x)\), respectively. The Hamiltonian is given by
\begin{equation}
    H(q,p,\varphi,\pi) = \underbrace{\sum_{i=1}^n\frac{\lvert p_i\rvert^2}{2m_i} + F_t(q,p)}_{H_\text{part}} + \underbrace{\vphantom{\frac{1}{2}}\langle\varphi,\rho\rangle_{L^2}}_{H_\text{int}} \underbrace{- \frac{1}{2}\lVert(\varphi,c\pi)\rVert_{\dot{H}^1}^2}_{H_\text{field}},
\end{equation}
where \(F_t\) is \(C^1\)-bounded and \(T\)-periodic for some \(T\), and \(\rho(x) = \sum_{i=1}^n\rho_i(x-q_i)\) describes the total charge distribution.
\begin{remark}\label{rmk:particlefieldfacts}
    We note the following facts:
    \begin{enumerate}
        \item The interaction Hamiltonian is linear in the field, and thus satisfies the latter half of \autoref{prop:sedisexact}. Hence, if the particles are classical and the field quantum, the Poisson bracket is exact up to all orders.
        \item One can make the scalar field massive (of mass \(m\)) by replacing the operator \(-\Delta\) with \(\left(\left(\frac{cm}{\hbar}\right)^2 - \Delta\right)\); in particular, this turns the homogeneous Sobolev space \(\dot{H}^{1/2}\) into the Sobolev space \(H^{1/2}\). Thanks to the closedness of \(\mathbb{T}^d\), these spaces are equivalent (up to constant functions), and hence give the same results.
        \item If we swap the sign of the field Hamiltonian, and give the field a mass of \(\frac{\hbar}{cL}\), we recover the particle-field model in \cite{fabertCuplengthEstimatesPeriodic2023}. The results in this paper hold equally for this model.
    \end{enumerate}
\end{remark}
The corresponding equations of motion are
\begin{align*}
    \dot{q}_i &= \frac{p_i}{m_i} + \nabla_{p_i} F_t(q,p),& \dot{p}_i &=  -\nabla_{q_i} F_t(q,p) - \langle\nabla\varphi,\rho_i(x-q_i)\rangle_{L^2},\\
    \dot{\varphi}(x) &= -(-\Delta)^{1/2}c^2\pi(x),& \dot{\pi}(x) &= (-\Delta)^{1/2}\varphi(x) - (-\Delta)^{-1/2}\rho(x).
\end{align*}
The second-order equation of motion of \(\varphi\) is given by
\begin{equation}
    \square\varphi = -\rho(x).\label{eq:waveeq}
\end{equation}\par
If \(F_t=0\) and \(n=3\), this model describes electrodynamics in Feynman--'t Hooft gauge in the nonmagnetic limit \(\mu_0\to0\) (after absorbing \(\frac{1}{\sqrt{\epsilon_0}}\) into \(\rho\)). Indeed, if we remove the now-decoupled \((A,c\Pi)\)-sector, then our Hamiltonian description of Feynman--'t Hooft gauge, as in \autoref{sec:hamiltonianED}, reduces to the model in this section.\par
After Fourier transforming, the left-hand side of equation \eqref{eq:waveeq} is of the form \((c^{-2}\omega^2 - k^2)\tilde{\varphi}\), which tells us that \(\square\) has a nontrivial kernel. If we define \(\ker{\square}^\perp\) to be all \(\varphi\) such that \(\tilde{\varphi}(k,\omega)=0\) whenever \(c^{-2}\omega^2 = k^2\), we can split \(\varphi = \varphi_0 + \varphi' \in \ker{\square} \oplus \ker{\square}^\perp\). Since the homogeneous part \((\varphi_0,c\pi_0)\) is no longer dynamical, but rather a parameter of the system, we regroup the Hamiltonian to reflect this fact.
\begin{equation}
    H = \underbrace{H(q,p,\varphi_0,\pi_0)}_{F_\text{part}[\varphi_0,\pi_0]} + \underbrace{H_\text{int}(q,p,\varphi',\pi')}_{F_\text{int}} + \underbrace{H_\text{field}(q,p,\varphi',\pi')}_{F_\text{field}}.
\end{equation}
Note that the \(k=0\) sector of \(\varphi_0\) has no impact on the dynamics, so we may set it to zero.
\begin{definition}
    A \emph{stochastic particle-field model}\footnote{Since the interaction term in our Hamiltonian is known in physics as the \emph{Yukawa interaction}, it could be more accurate to call such a model a \emph{stochastic Yukawa model}. We choose to stick with simpler nomenclature in this paper.} is the model described above, where the \(k\neq 0\) Fourier coefficients of \(\varphi_0\) (the \emph{zero-point field}) are normally distributed, and \(\tilde{\varphi}_0(0,0)=0\).\par
    \emph{\(T\)-periodic solutions} to the stochastic particle-field system are \(T\)-periodic measures \(\chi\) over \(C^1(\underbrace{\mathbb{R}/T\mathbb{Z}}_{S^1},\underbrace{T^*\mathbb{T}^{nd} \times (\dot{H}^{1/2}(\mathbb{T}^d)\oplus \dot{H}^{1/2}(\mathbb{T}^d))}_X)\) which have full measure on the functions which satisfy Hamilton's equation, and the pushforward of \(\chi\) to the homogeneous sector is precisely the distribution of the zero-point field.
\end{definition}
\begin{remark}
    To make \(\chi\) behave more like evolving probability distributions (i.e., a Wigner function), we could have instead defined it as a time-periodic measure over \(X\) which satisfies the Poisson or Moyal equation (as a distribution). This is currently incompatible with Floer theory, since it remains unclear how to define the action functional for this case. Our definition is, however, stronger, inducing such a measure through
    \begin{equation*}
        \overline{\chi}(t)(Y\subseteq X) \coloneqq \chi(\{u\in C^1(S^1,X)\mid u(t)\in Y\}). 
    \end{equation*}
\end{remark}
Proving the existence of a periodic solution is difficult, since the machinery of Floer theory does not yet apply to measures on infinite-dimensional spaces. To solve this, we employ the following strategy, based on \cite{fabertCuplengthEstimatesTimeperiodic2024}.
\begin{enumerate}
    \item We discretize the zero-point field's probability distribution to a binomial distribution, and cut it off at some finite frequency  (such that there are only finitely many zero-point field configurations).
    \item We prove that a \(T\)-periodic orbit exists in each of these cases, using Floer methods.
    \item The solutions are combined into a sum of Dirac measures.
    \item By letting the binomial distribution approach a normal distribution, and letting the cutoff go to infinity, we prove that (a subsequence of) the sequence of measures converges.
\end{enumerate}
In \cite{fabertCuplengthEstimatesPeriodic2023}, we see that \(T\)-periodic orbits of non-stochastic particle-field systems exist, as long as \(\frac{c^2T^2}{L^2}\) (the squared ratio between the time and space period) is Diophantine. We summarize the results here, so we can generalize them to the stochastic case. Let us recall what Diophantineness means, and which of its properties are necessary.
\begin{definition}
    A number \(\sigma\in\mathbb{R}\) is \emph{Diophantine} if there exist \(C>0,r\geq 2\) such that
    \begin{equation}
        \inf_{m\in\mathbb{Z}}\left\lvert \sigma - \frac{m}{n} \right\rvert \geq C\lvert n\rvert^{-r}\quad\text{for all \(n\in\mathbb{N}\)}.
    \end{equation}
    In other words, there is a lower bound to how closely \(\sigma\) can be approximated as a rational number.
\end{definition}
This is hardly an assumption, since the set of Diophantine numbers has full measure: any generic choice of \((T,L)\) is Diophantine. In particular, we can approximate any \((T,L)\) as Diophantine to arbitrary accuracy.\par
If \((T,L)\) is Diophantine, free fields can never be \(T\)-periodic, which reveals the paradoxical nature of the stochastic approach: we are looking for \(T\)-periodic solutions to a problem which itself is not \(T\)-periodic. This means that, mathematically, we should instead be looking for solutions which are \(T\)-periodic on the interval \([0,T]\), i.e., stationary points of the time-\(T\) evolution Hamiltonian diffeomorphism --- which we will simply refer to as \(T\)-periodic orbits. Physically, the paradox cancels out, since the phase of the zero-point field is uniformly distributed; therefore, the zero-point field is, in a physical sense, constant.\par
The Diophantineness condition implies that any contributions to the field produced by the particles do not blow up by constructive interference (the small divisor problem), bounding the field.
\begin{lemma}[\cite{fabertCuplengthEstimatesPeriodic2023}]\label{lem:DiophantineEigvals}
    Let \(\phi_t\) be the time evolution corresponding to \(H_\text{field}\). For \(\frac{c^2T^2}{L^2}\) Diophantine, there exists \(C'>0\) such that the distance between any eigenvalue \(e^{2\pi i\frac{cT\lvert k\rvert}{L}}\) (for \(k\in\mathbb{Z}^d\)) of \(\phi_T\) and \(1\) has lower bound
    \begin{equation}
        \lvert e^{2\pi i\frac{cT\lvert k\rvert}{L}} - 1 \rvert \geq C'(\lvert k\rvert)^{1-r}.
    \end{equation}
\end{lemma}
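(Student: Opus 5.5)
The plan is to reduce the eigenvalue estimate to a lower bound on the distance from \(s\lvert k\rvert\) to the integers, where \(s \coloneqq \frac{cT}{L}\) and \(\sigma = s^2\) is the Diophantine number. That distance is then controlled by the Diophantine hypothesis through a difference-of-squares factorisation. The honest caveat is up front: the Diophantine condition as defined above only yields the exponent \(1-2r\). The stated exponent \(1-r\) needs the condition in a rescaled form, and that is where the proof can fail.

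\emph{Step 1 (trigonometric reduction).} Write \(\theta_k = 2\pi s\lvert k\rvert\). Since \(\lvert e^{i\theta}-1\rvert = 2\lvert\sin(\theta/2)\rvert\) and \(\lvert\sin(\pi x)\rvert \geq 2\,\mathrm{dist}(x,\mathbb{Z})\), it suffices to show
\begin{equation*}
    \delta_k \coloneqq \mathrm{dist}(s\lvert k\rvert,\mathbb{Z}) \geq C''\lvert k\rvert^{1-r}
\end{equation*}
for all \(k\in\mathbb{Z}^d\setminus\{0\}\). The case \(k=0\) is excluded, because the zero mode was removed from the model. Only finitely many \(k\) satisfy \(\lvert k\rvert\leq R_0\), for a fixed \(R_0\). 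For these, \(\delta_k>0\), because \(s\lvert k\rvert\in\mathbb{Z}\) would make \(\sigma = m^2/\lvert k\rvert^2\) rational, which the Diophantine condition forbids. Shrinking \(C''\) therefore handles them, and we may assume \(\lvert k\rvert\) is large.

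\emph{Step 2 (squaring trick).} Put \(N = \lvert k\rvert^2\in\mathbb{N}\), and let \(m\) be the integer nearest to \(s\lvert k\rvert\). Then
\begin{equation*}
    \lvert\sigma N - m^2\rvert = \delta_k\,(s\lvert k\rvert + m) \leq \delta_k\,(2s\lvert k\rvert+1).
\end{equation*}
Hence
\begin{equation*}
    \delta_k \geq \frac{N\,\lvert\sigma - m^2/N\rvert}{3s\lvert k\rvert},
\end{equation*}
so everything reduces to a lower bound for \(\lvert \sigma - m^2/N\rvert\).

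\emph{Step 3 (applying the Diophantine condition).} To reach \(\delta_k\gtrsim\lvert k\rvert^{1-r}\), we need
\begin{equation*}
    \Bigl\lvert\sigma - \frac{m^2}{\lvert k\rvert^2}\Bigr\rvert \geq C\lvert k\rvert^{-r}.
\end{equation*}
That is, the Diophantine inequality must be applied with the exponent measured against the spatial frequency \(\lvert k\rvert\), not against the denominator \(N=\lvert k\rvert^2\). I would use the hypothesis in this form. It is the form in which it enters \cite{fabertCuplengthEstimatesPeriodic2023}, where the relevant rational approximants of \(c^2T^2/L^2\) are exactly the numbers \(m^2/\lvert k\rvert^2\). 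Given this, Step 2 yields
\begin{equation*}
    \delta_k \geq \frac{C\lvert k\rvert^{2-r}}{3s\lvert k\rvert} = C''\lvert k\rvert^{1-r},
\end{equation*}
and Step 1 turns this into the stated bound with \(C' = 4C''\).

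\emph{Main obstacle.} The bottleneck is Step 3. If one instead plugs \(n=N=\lvert k\rvert^2\) literally into the definition as written, one only gets \(C N^{-r} = C\lvert k\rvert^{-2r}\). This is the weaker exponent \(1-2r\), and it is not the stated claim. So the argument must either justify the \(\lvert k\rvert\)-scaled inequality for approximants of the special form \(m^2/\lvert k\rvert^2\), or take it as the precise Diophantine hypothesis imported from the cited work. I would first check whether the reference's definition is phrased in this scaled form. I would not attempt to derive it from the definition stated in this paper, since for general Diophantine \(\sigma\) the approximants \(m^2/N\) carry no extra structure that improves the exponent.
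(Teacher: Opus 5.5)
Your Steps 1 and 2 are correct, but Step 3 is not a proof step, so the proposal does not establish the lemma. You propose to adopt as hypothesis the inequality $\lvert\sigma - m^2/\lvert k\rvert^2\rvert \geq C\lvert k\rvert^{-r}$, for $m$ the integer nearest to $s\lvert k\rvert$. By your own identity $\lvert\sigma - m^2/\lvert k\rvert^2\rvert = \delta_k\,(s\lvert k\rvert+m)/\lvert k\rvert^2$, and since $s\lvert k\rvert\leq s\lvert k\rvert+m\leq 3s\lvert k\rvert$ for large $\lvert k\rvert$, this inequality is equivalent to the conclusion $\delta_k\gtrsim\lvert k\rvert^{1-r}$, up to constants depending only on $s$. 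Adopting it is therefore circular. The paper does not supply it either: it gives no argument beyond the citation of \cite{fabertCuplengthEstimatesPeriodic2023}. Its stated definition is the standard one, with the exponent measured against the denominator $n$. Your claim about the form in which the hypothesis enters the cited work is a conjecture, not an argument.

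Here is what the stated definition does give. Apply your Steps 1--2 with $n=N=\lvert k\rvert^2$ and use $s\lvert k\rvert+m\leq 2s\lvert k\rvert+\frac12\leq(2s+1)\lvert k\rvert$, so small $k$ need no separate treatment. You get $\delta_k\geq\frac{C}{2s+1}\lvert k\rvert^{1-2r}$ for every $k\neq 0$, hence $\lvert e^{2\pi i s\lvert k\rvert}-1\rvert\geq\frac{4C}{2s+1}\lvert k\rvert^{1-2r}$.

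The observation you are missing is that the definition only asserts the existence of \emph{some} exponent, and it is monotone in that exponent. If the condition holds with $(C,r_0)$, it holds with $(C,r)$ for every $r\geq r_0$, because $n^{-r}\leq n^{-r_0}$. Taking $r\coloneqq 2r_0$, which is again an admissible exponent for $\sigma$, your bound becomes exactly $C'\lvert k\rvert^{1-r}$. So the lemma follows from the definition as stated, with no extra hypothesis.

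The price is that $r$ can no longer be an arbitrary (say, optimal) Diophantine exponent of $\sigma$. The regularity assumption $\rho_i\in\dot{H}^{r-1}$ in \autoref{lem:smallness_particlefield} and in the main theorem must then be read with this doubled exponent. If the lemma is intended for every admissible $r$, your proposal does not prove it, and it is not clear that it follows from the stated definition at all.
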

\begin{corollary}[\cite{fabertCuplengthEstimatesPeriodic2023}]\label{cor:DiophantineBounds}
    Under the above conditions, for any \(T\)-periodic field \((\varphi,c\pi)\) and any \(k\),
    \begin{equation}
        \lVert (\varphi,c\pi)\rVert_{\dot{H}^k} \leq \frac{1}{C'}\left(\frac{L}{2\pi}\right)^{r-1}\lVert \phi_T(\varphi,c\pi) - (\varphi,c\pi)\rVert_{\dot{H}^{k+r-1}}.
    \end{equation}
\end{corollary}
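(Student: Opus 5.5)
The plan is to diagonalise the free field flow $\phi_t$ in Fourier space and apply \autoref{lem:DiophantineEigvals} mode by mode. The periodicity hypothesis plays no role beyond fixing the time $T$ at which $\phi_T$ is evaluated, so the inequality should in fact hold for every $(\varphi,c\pi)$ in the homogeneous space.

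\textbf{Step 1: diagonalise $\phi_t$.} I write $(\varphi,c\pi)=\sum_{k\in\mathbb{Z}^d\setminus\{0\}}(a_k,b_k)e^{\frac{2\pi i}{L}k\cdot x}$. The $k=0$ mode is absent because we work in $\dot H^\bullet$, where constants are quotiented out. From the equations of motion generated by $H_\text{field}$ alone,
\[
\dot\varphi=-c(-\Delta)^{1/2}(c\pi),\qquad (c\pi)^{\boldsymbol{\cdot}}=c(-\Delta)^{1/2}\varphi,
\]
each Fourier mode evolves by the planar rotation $R_{\theta_k(t)}$ acting on $(a_k,b_k)$, with angle $\theta_k(t)=\frac{2\pi c\lvert k\rvert t}{L}$. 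Equivalently, on the complex combination $a_k+ib_k$ it acts as multiplication by $e^{i\theta_k(t)}$; this matches the complex-field viewpoint $(\varphi_0,c\pi_0)$ used earlier in the paper. For the real $2\times 2$ rotation, one has for every vector $v\in\mathbb{C}^2$
\[
\lvert (R_\theta-I)v\rvert=\lvert e^{i\theta}-1\rvert\,\lvert v\rvert .
\]
This holds because $R_\theta-I=(\cos\theta-1)I+\sin\theta\,J$ with $J$ antisymmetric, so $\lvert(R_\theta-I)v\rvert^2=\bigl((1-\cos\theta)^2+\sin^2\theta\bigr)\lvert v\rvert^2$.

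\textbf{Step 2: lower bound on each mode.} The $k$-th Fourier mode of $\phi_T(\varphi,c\pi)-(\varphi,c\pi)$ is $(R_{\theta_k(T)}-I)(a_k,b_k)$. By Step 1 and \autoref{lem:DiophantineEigvals}, its modulus is at least
\[
C'\lvert k\rvert^{1-r}\,\lvert(a_k,b_k)\rvert .
\]

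\textbf{Step 3: sum against the Sobolev weights.} The homogeneous Sobolev norm is
\[
\lVert(\varphi,c\pi)\rVert_{\dot H^s}^2=\sum_k\left(\tfrac{2\pi\lvert k\rvert}{L}\right)^{2s}\lvert(a_k,b_k)\rvert^2,
\]
up to the fixed volume normalisation, which is common to both sides. Using Step 2,
\[
\lVert(\varphi,c\pi)\rVert_{\dot H^k}^2\le C'^{-2}\sum_{k'}\left(\tfrac{2\pi\lvert k'\rvert}{L}\right)^{2k}\lvert k'\rvert^{2(r-1)}\bigl\lvert\bigl(\phi_T(\varphi,c\pi)-(\varphi,c\pi)\bigr)_{k'}\bigr\rvert^2 .
\]
I then rewrite the extra factor as
\[
\lvert k'\rvert^{2(r-1)}=\left(\tfrac{L}{2\pi}\right)^{2(r-1)}\left(\tfrac{2\pi\lvert k'\rvert}{L}\right)^{2(r-1)} .
\]
This turns the right-hand side into $C'^{-2}\left(\tfrac{L}{2\pi}\right)^{2(r-1)}\bigl\lVert\phi_T(\varphi,c\pi)-(\varphi,c\pi)\bigr\rVert_{\dot H^{k+r-1}}^2$. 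Taking square roots gives the stated bound.

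\textbf{Main obstacle.} There is no real analytic obstacle. The step needing most care is Step 1: one must check that the flow is a genuine rotation in each mode with frequency exactly $\frac{2\pi c\lvert k\rvert}{L}$. That requires getting the signs in the field equations right and confirming that the pair norm weights $\varphi$ and $c\pi$ identically, so that the complex modulus $\lvert e^{i\theta}-1\rvert$ controls the real vector norm. One must also note that the source term $-(-\Delta)^{-1/2}\rho$ does not enter, since $\phi_t$ is the flow of $H_\text{field}$ alone.
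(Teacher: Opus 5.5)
Your proof is correct and is essentially the argument behind the paper's statement. The paper gives no proof of its own: it cites \cite{fabertCuplengthEstimatesPeriodic2023} and presents the result as a direct corollary of \autoref{lem:DiophantineEigvals}, which amounts to your steps of diagonalising $\phi_T$ mode by mode, bounding each mode via $\lvert e^{2\pi i cT\lvert k\rvert/L}-1\rvert\ge C'\lvert k\rvert^{1-r}$, and absorbing $\lvert k\rvert^{r-1}$ into a Sobolev shift by $r-1$ to get exactly the constant $\frac{1}{C'}\left(\frac{L}{2\pi}\right)^{r-1}$. Your remark that $T$-periodicity is not needed is also right: the inequality holds for every field, and periodicity only matters later, in \autoref{lem:smallness_particlefield}, where $\phi_T(\varphi',c\pi')-(\varphi',c\pi')$ is rewritten as an integral of $\nabla^{\dot{H}^{1/2}}\overline{F}_\text{int}$.
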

\begin{remark}\label{rmk:reasonforH12}
    The time evolution \(\phi_t\) reveals the reason we originally chose to work in \(\dot{H}^{1/2}\) instead of any other \(\dot{H}^k\). To get the same equations of motion on \(\dot{H}^k\) without changing \(H_{\text{int}}\), we must set \(H_{\text{field}} = -\frac{1}{2}\lVert\varphi\rVert_{\dot{H}^1}^2 - \frac{1}{2}\lVert c\pi\rVert_{\dot{H}^{2k}}\), and we find that
    \begin{equation}
        \phi_t(\varphi,c\pi) = \begin{pmatrix}\cos(ct\sqrt{-\Delta}) & \sin(t\sqrt{-\Delta})(-\Delta)^{k-1/2}\\
        -\sin(ct\sqrt{-\Delta})(-\Delta)^{1/2-k} & \cos(ct\sqrt{-\Delta})\end{pmatrix}\begin{pmatrix}\varphi\\c\pi\end{pmatrix},
    \end{equation}
    so \(\phi_t\) is bounded if and only if \(k=\frac{1}{2}\).
\end{remark}
Now we are in a position to prove the existence of orbits. First, due to the noncompactness of our space, we should construct \(C^0\)-bounds, in the sense of \cite{cieliebakPseudoholomorphicCurvesPeriodic1994}.
\begin{lemma}\label{lem:smallness_particlefield}
    Let all the \(\rho_i\) be \(\dot{H}^{r-1}\), the total charge \(\sum_{i=1}^n\int_{\mathbb{T}^d}\rho_i(x)\ \mathrm{d}x\) zero\footnote{This is a standard assumption in electrodynamics, since if the total charge on a closed manifold is nonzero, the electric field cannot be defined globally. This is known as a type II paradox in \cite{liElectrodynamicsCosmologicalScales2016}.}, \(\mathcal{A}'\in\mathbb{R}\) and \(\Phi\) a set of free fields with bounded \(\dot{H}^1\)-norm. There exist \(R_1,R_2>0\) such that for all \((\varphi_0,c\pi_0)\in\Phi\), every \(T\)-periodic orbit of
    \begin{equation}
        \overline{H} \coloneqq F_\text{part}[\varphi_0,\pi_0] + \underbrace{\chi_{R_1}(\lvert p\rvert)\chi_{R_2}(\lvert\langle\varphi',\rho\rangle_{L^2}\rvert)F_\text{int}}_{\overline{F}_\text{int}} + F_\text{field}
    \end{equation}
    with action at most \(\mathcal{A}'\) is also a \(T\)-periodic orbit of \(H\). Here, \(\chi_R(r) = \begin{cases}1\quad&r\leq R/2,\\0\quad&r\geq R\end{cases}\) is smooth with \(-\frac{4}{R}\leq \chi_R'(r)\leq 0\). Furthermore, the field norm can be bounded by \(\lVert(\varphi',c\pi')\rVert_{\dot{H}^k} \leq \sqrt{\frac{5cT}{2C'}\left(\frac{L}{2\pi}\right)^{1-2k}R_2}\) for \(k\leq 1\).
\end{lemma}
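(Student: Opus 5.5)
Our aim is to show that on every $T$-periodic orbit of $\overline{H}$ with action at most $\mathcal{A}'$, both cutoffs are identically one. Then $\overline{F}_\text{int}=F_\text{int}$ along the orbit, so $\overline{H}$ and $H$ have the same Hamiltonian vector field there. We follow the strategy of \cite{fabertCuplengthEstimatesPeriodic2023} and first bound the field, then the momenta.

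\textbf{Step 1: the field bound.} The field equations for $(\varphi',c\pi')$ under $\overline{H}$ are the free evolution $\phi_t$ plus a forcing term. This term is the derivative of $\overline{F}_\text{int}$, which is $\chi_{R_1}\chi_{R_2}\rho$ together with a term coming from $\chi_{R_2}'$. Since $\lvert\chi_{R_2}'\rvert\leq 4/R_2$, and the cutoff vanishes unless $\lvert\langle\varphi',\rho\rangle\rvert\leq R_2$, the forcing is bounded pointwise in time. The bound is in terms of $\rho$, and the total charge vanishes, so $(-\Delta)^{-1/2}\rho$ makes sense. We use Duhamel's formula to write $\phi_T(\varphi',c\pi')(0)-(\varphi',c\pi')(0)$ as an integral of this forcing over $[0,T]$. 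Then \autoref{cor:DiophantineBounds} converts this into a bound on $\lVert(\varphi',c\pi')\rVert_{\dot{H}^k}$; the $r-1$ loss of derivatives is absorbed by the assumption $\rho_i\in\dot{H}^{r-1}$.

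\textbf{Sharper field bound via energy.} To obtain the stated square-root form $\sqrt{\frac{5cT}{2C'}(\frac{L}{2\pi})^{1-2k}R_2}$, we instead pair the equation with the field. The change in $F_\text{field}$ over one period equals $\int_0^T$ of the interaction forcing paired with the field. This is at most roughly $\lvert\langle\varphi',\rho\rangle\rvert\leq R_2$ per unit time, with the derivative-of-cutoff term contributing a factor $(1+4)$, hence the $5$. Combining this with the Diophantine estimate gives a quadratic inequality in the field norm, whose solution is the stated bound. The same argument applies at every time $t\in[0,T]$, giving a uniform-in-time bound.

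\textbf{Step 2: the momentum bound.} We bound $p$ using the action. The action of a periodic orbit is $\int p\,dq+\int\pi\,d\varphi-\int\overline{H}\,dt$. The field contributions are bounded by Step 1, and the free fields in $\Phi$ have bounded $\dot{H}^1$-norm. Also $F_t$ and $\langle\varphi_0+\varphi',\rho\rangle$ are bounded, because $\nabla\varphi$ paired with $\rho_i$ is controlled. The particle equations are $\dot{q}_i=p_i/m_i+(\text{bounded})$ and $\dot{p}_i=(\text{bounded})$, where the bound uses $\lVert\varphi'\rVert_{\dot{H}^1}$ from Step 1. Hence $p$ varies by only $O(T)$ over a period, and the action is approximately $\sum\lvert p_i\rvert^2T/(2m_i)$. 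The action bound $\mathcal{A}'$ then forces $\lvert p\rvert\leq C(\mathcal{A}',R_2,\Phi)$ along the whole orbit.

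\textbf{Step 3: choosing the constants.} Choose $R_1>2C(\mathcal{A}',R_2,\Phi)$, so that $\chi_{R_1}=1$. Then choose $R_2$ large enough that $\lvert\langle\varphi',\rho\rangle\rvert\leq\lVert\varphi'\rVert_{\dot{H}^{1/2}}\lVert\rho\rVert_{\dot{H}^{-1/2}}\lesssim\sqrt{R_2}<R_2/2$, so that $\chi_{R_2}=1$. The $\sqrt{R_2}$ growth is exactly what allows this self-consistency.

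\textbf{Main obstacle.} We expect the hardest step to be the order of choices. The momentum bound depends on $R_2$ and the field bound does not depend on $R_1$, so we must fix $R_2$ first, independently of $R_1$, and only then choose $R_1$. We also need the estimates in Step 2 to be uniform over $\Phi$.
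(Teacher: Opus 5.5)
Your skeleton matches the paper's proof. The Duhamel/Diophantine estimate bounds the field independently of the cutoffs, which fixes $R_2$. The action bound together with the bound on $\dot p$ then controls $\lvert p\rvert$ uniformly over $\Phi$ and fixes $R_1$. The genuine problem is the paragraph ``Sharper field bound via energy'', which does not work. For a $T$-periodic orbit the net change of $F_\text{field}$ over one period is zero, so that identity yields no estimate. Moreover, the instantaneous rate of change of $F_\text{field}$ along the flow of $\overline{H}$ is, up to sign and cutoff factors, $\langle\rho,(-\Delta)^{1/2}c^2\pi'\rangle_{L^2}$. That is $\rho$ paired with the free velocity $\dot\varphi'$, and it is not controlled by $\lvert\langle\varphi',\rho\rangle_{L^2}\rvert\leq R_2$. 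No quadratic inequality in the field norm actually comes out. Your Step 3 then relies on this unproved $\sqrt{R_2}$ bound. The claim that the $\sqrt{R_2}$ growth makes the choice of $R_2$ self-consistent also misidentifies the mechanism.

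The fix is already contained in your Step 1, and it is what the paper does. On the support of $\chi_{R_2}'$ one has $\lvert\chi_{R_2}'\rvert\,\lvert\langle\varphi',\rho\rangle_{L^2}\rvert\leq\frac{4}{R_2}R_2=4$. So with $\epsilon\coloneqq n\sup_i\lVert\rho_i\rVert_{\dot H^{r-1}}$ the forcing obeys $\lVert\nabla^{\dot H^{1/2}}\overline{F}_\text{int}\rVert_{\dot H^{k+r-1}}<5(\frac{L}{2\pi})^{1-k}\epsilon$ for $k\leq 1$; this is where $5=1+4$ comes from. Then \autoref{cor:DiophantineBounds} gives $\lVert(\varphi',c\pi')\rVert_{\dot H^k}\leq\frac{5cT}{C'}(\frac{L}{2\pi})^{r-k}\epsilon$, independent of both $R_1$ and $R_2$. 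Taking $k=0$ and using the Poincaré inequality for the mean-zero $\rho$ gives $\lvert\langle\varphi',\rho\rangle_{L^2}\rvert\leq\frac{5cT}{C'}(\frac{L}{2\pi})^{2r-1}\epsilon^2$. One therefore sets $R_2\coloneqq\frac{10cT}{C'}(\frac{L}{2\pi})^{2r-1}\epsilon^2$, which gives $\chi_{R_2}\equiv 1$. Substituting $\epsilon^2=\frac{C'}{10cT}(\frac{L}{2\pi})^{1-2r}R_2$ into the linear bound gives exactly $\sqrt{\frac{5cT}{2C'}(\frac{L}{2\pi})^{1-2k}R_2}$. So the square-root form is just bookkeeping, and for any larger $R_2$ it is weaker, not sharper. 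With that replacement, your Step 2 and your order of choices ($R_2$ first, then $R_1$ depending on $\mathcal{A}'$ and the $\dot H^1$-bound on $\Phi$) go through as in the paper.
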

The main difference between this result and the results of \cite{fabertCuplengthEstimatesPeriodic2023} is that we consider a set of bounded free fields instead of a single fixed free field.
\begin{proof}
    Since the total charge is zero, the equation of motion of \(\varphi\) tells us that \(\int_{\mathbb{T}^d}\varphi(x)\ \mathrm{d}x = 0\). Therefore, for all \(k\leq\ell\),
    \begin{equation*}
        \lVert\varphi\rVert_{\dot{H}^k} \leq \left(\frac{L}{2\pi}\right)^{\ell-k}\lVert\varphi\rVert_{\dot{H}^\ell},\quad \lVert\rho\rVert_{\dot{H}^k} \leq \left(\frac{L}{2\pi}\right)^{\ell-k}\lVert\rho\rVert_{\dot{H}^\ell}.
    \end{equation*}\par
    By \autoref{cor:DiophantineBounds}, we have for all \(k\in\mathbb{R}\) and \(T\)-periodic orbits \((\varphi',c\pi')\) of \(\overline{H}\)
    \begin{align*}
        \lVert (\varphi',c\pi')\rVert_{\dot{H}^k} &\leq \frac{1}{C'}\left(\frac{L}{2\pi}\right)^{r-1}\lVert \phi_T(\varphi',c\pi') - (\varphi',c\pi')\rVert_{\dot{H}^{k+r-1}},
        \intertext{using \(T\)-periodocity, and realizing that \(\phi_T\) already takes care of time evolution due to \(F_\text{field}\) (c.f. \cite{fabertCuplengthEstimatesPeriodic2023} for details),}
        &= \frac{c}{C'}\left(\frac{L}{2\pi}\right)^{r-1}\left\lVert\int_0^{-T}\nabla^{\dot{H}^{1/2}}\overline{F}_\text{int}\circ\phi_{-t}\ \mathrm{d}t\right\rVert_{\dot{H}^{k+r-1}}\\
        &\leq \frac{cT}{C'}\left(\frac{L}{2\pi}\right)^{r-1}\sup\lVert\nabla^{\dot{H}^{1/2}}\overline{F}_\text{int}\rVert_{\dot{H}^{k+r-1}}.
    \end{align*}
    If we let \(\epsilon\coloneqq n\sup_{i=1,\dots,n}{\lVert\rho_i\rVert_{\dot{H}^{r-1}}}\), such that \(\lVert\rho\rVert_{\dot{H}^{r-1}}\leq \epsilon\), we have for \(k\leq1\)
    \begin{align*}
        \lVert\nabla^{\dot{H}^{1/2}}\overline{F}_\text{int}\rVert_{\dot{H}^{k+r-1}} < 5\left(\frac{L}{2\pi}\right)^{1-k}\epsilon,
    \end{align*}
    so for the same \(k\),
    \begin{equation*}
        \lVert (\varphi',c\pi')\rVert_{\dot{H}^k} \leq \frac{5cT}{C'}\left(\frac{L}{2\pi}\right)^{r-k}\epsilon.
    \end{equation*}
    In particular,
    \begin{equation*}
        \lvert\langle\varphi',\rho(x-q)\rangle_{L^2}\rvert \leq \left(\frac{L}{2\pi}\right)^{r-1}\lVert \varphi'\rVert_{L^2}\lVert\rho\rVert_{\dot{H}^{r-1}} \leq \frac{5cT}{C'}\left(\frac{L}{2\pi}\right)^{2r-1}\epsilon^2.
    \end{equation*}
    We choose \(R_2\coloneqq \frac{10cT}{C'}\left(\frac{L}{2\pi}\right)^{2r-1}\epsilon^2\).\par
    Recall that the symplectic action functional (on a cotangent bundle) of a \(T\)-periodic curve \(u\) is given by
    \begin{equation}
        \mathcal{A}(u) = \int_0^T p(t)\cdot\dot{q}(t) - H(u(t))\ \mathrm{d}t.\label{eq:actiondefinition}
    \end{equation}
    Thus, due to the action limitation, given that the \(C^0\)-upper bound on \(F_t\) is some \(V\), and the \(\dot{H}^1\)-norm of the zero-point fields is bounded by \(\mathbb{A}\), if \(M = \sup_{i=1,\dots,n}{m_i}\),
    \begin{align*}
        \frac{\lvert p\rvert_{L^2}^2}{2M} &\leq \mathcal{A}' + TV + T\mathbb{A}\left(\frac{L}{2\pi}\right)^r\epsilon + \left(\frac{L}{2\pi} + \frac{5cT}{2C'}\right)\frac{5cT^2}{C'}\left(\frac{L}{2\pi}\right)^{2r-2}\epsilon^2 + \frac{T}{2}\mathbb{A}^2,
        \intertext{and since we may let \(\mathbb{A}\) be arbitrarily large,}
        &\leq T\mathbb{A}^2.
    \end{align*}
    If we let \(V'\) be the \(C^1\)-upper bound of \(F_t\), the equation of motion of \(p_i\) gives
    \begin{align*}
        \lvert \dot{p}\rvert_{L^2} &\leq T^{1/2}V' + \left(\mathbb{A} + \frac{5cT}{C'}\left(\frac{L}{2\pi}\right)^{r-1}\epsilon\right)T^{1/2}\left(\frac{L}{2\pi}\right)^{r-1}\epsilon,
        \intertext{again using that \(\mathbb{A}\) may be arbitrarily large,}
        &\leq 2T^{1/2}\left(\frac{L}{2\pi}\right)^{r-1}\mathbb{A}\epsilon.
    \end{align*}
    Therefore, in particular, for \(\mathbb{A}\) sufficiently large,
    \begin{equation*}
        \lvert p\rvert \leq T^{-1/2}\lvert p\rvert_{L^2} + T^{1/2}\lvert\dot{p}\rvert_{L^2} \leq \left(\sqrt{2M} + 2T\left(\frac{L}{2\pi}\right)^{r-1}\epsilon\right)\mathbb{A}.
    \end{equation*}
    We choose \(R_1 \coloneqq 2\left(\sqrt{2M} + 2T\left(\frac{L}{2\pi}\right)^{r-1}\epsilon\right)\mathbb{A}\). This completes the proof.
\end{proof}
The lemma applies also for the general particle-field system in \cite{fabertCuplengthEstimatesPeriodic2023}. An essential assumption here is that the interaction Hamiltonian has bounded first derivative, which is not the case in electrodynamics outside the nonmagnetic limit.\par
We finally conclude with the existence of stochastic orbits.
\begin{makethm}{theorem}{thm:cuplengthnonmagnetic}
    Consider a stochastic particle-field model on \(\mathbb{T}^d = \mathbb{R}^d/(L\mathbb{Z})^d\) with \(n\) particles with charge distributions \((\rho_i)_{i=1}^n\), and \(T>0\). If \(\frac{c^2T^2}{L^2}\) is Diophantine, the \(\rho_i\) are \(\dot{H}^{r-1}\), and the total charge \(\sum_{i=1}^n\int_{\mathbb{T}^d}\rho_i(x)\ \mathrm{d}x\) is zero, then there exist at least \((nd+1)\) \(T\)-periodic solutions.
\end{makethm}
\begin{proof}
    In this proof, we will set \(n=1\) for clarity, but remark that it is generalized by simply replacing \(\mathbb{T}^d\) with \(\mathbb{T}^{nd}\). \par
    To prove this, we essentially follow the strategy outlined above. We consider any discretization of the zero-point field, and cut it off at a finite frequency, such that the set of zero-point field configurations \(\Phi\) is finite and contained in \(\dot{H}^1\), and is equipped with the probability function \(\nu\colon\Phi\to[0,1]\). By \autoref{lem:smallness_particlefield} and \cite[Theorem 4.1]{fabertCuplengthEstimatesPeriodic2023}, \((d+1)\) orbits exist for each \((\varphi_0,c\pi_0)\in\Phi\) (thanks to the \(C^0\)-bounds of the cut-off Hamiltonian). The construction of these solutions follows standard Floer methods, and as such, we will only sketch the highlights (see \cite{fabertCuplengthEstimatesPeriodic2023,fabertCuplengthEstimatesTimeperiodic2024} for more detailed implementations of the strategy).\par
    Hamiltonian Floer theory concerns a variant of Morse theory on the space of \(T\)-periodic curves on, in this case, \(X\). Periodic solutions to Hamilton's equations are precisely stationary points of the action \(\mathcal{A}(u)\) (see equation \eqref{eq:actiondefinition}). Upwards gradient flows \(\tilde{u}(s,t)\colon\mathbb{R}\times S^1\to X\), known as \emph{Floer curves} or \emph{Floer cylinders}, connect these solutions, and satisfy the Floer equation
    \begin{equation}
        \partial_s\tilde{u} - J(\partial_t\tilde{u} - X_H(\tilde{u}(s,t))) = 0,
    \end{equation}
    for \(X_H\) the Hamiltonian vector field of \(H\). The energy of a Floer curve is given by
    \begin{equation}
        E(\tilde{u}) = \int_{-\infty}^\infty\int_0^T \lvert\partial_s\tilde{u}(s,t)\rvert^2\ \mathrm{d}t\mathrm{d}s.
    \end{equation}
    If a Floer curve connects periodic solutions \(\lim_{s\to\pm\infty}\tilde{u}(s,-) = u^\pm\), the energy is equal to \(\mathcal{A}(u^+) - \mathcal{A}(u^-)\).\par
    We furthermore define a moduli space \(\mathcal{M}^{\tau,\leq\mathcal{A}'}_{(\varphi_0,c\pi_0)}\) which consists of contractible finite-energy Floer cylinders of \(F_{\text{part}}[\varphi_0,\pi_0]+F_{\text{field}}\) of action at most \(\mathcal{A}'\), with \(\overline{F}_{\text{int}}\) turned on in the middle (with \(R_1,R_2\) induced from \autoref{lem:smallness_particlefield}) --- that is, it satisfies the modified Floer equation
    \begin{equation*}
        \partial_s\tilde{u} - J(\partial_t\tilde{u} - X_{F_{\text{part}}[\varphi_0,\pi_0]+\sigma_\tau(s)\overline{F}_{\text{int}}+F_{\text{field}}}) = 0,
    \end{equation*}
    where \(\sigma_\tau\colon\mathbb{R}\to[0,1]\) is smooth with \(\sigma_\tau(s) = \begin{cases}0\quad&s\leq-1, s\geq\tau+1,\\1\quad&0\leq s\leq\tau\end{cases}\).\par
    The (Gromov--Floer) compactification of \(\mathcal{M}^{\tau,\leq\mathcal{A}'}_{(\varphi_0,c\pi_0)}\) is nonempty. Furthermore, for \(\mathcal{A}'\) large enough, the evaluation map pulls back \(\theta_1\smile\dots\smile\theta_d\), where \(\theta_i\) are some generators of the cohomology of \(\mathbb{T}^d\), to a nonzero cohomology class for all \(\tau\geq 0\) (see \cite[Proposition 4.4]{fabertCuplengthEstimatesPeriodic2023} and \cite[Theorem 7.6]{cieliebakPseudoholomorphicCurvesPeriodic1994}). By taking the limit \(\tau\to\infty\), we know that there are Floer curves \((\tilde{u}_j(\varphi_0,c\pi_0))_{j=1}^d\) of \(\overline{H}[\varphi_0,c\pi_0]\) which satisfy the following properties:
    \begin{enumerate}
        \item The curve \(\tilde{u}_j(\varphi_0,c\pi_0)\) connects a periodic solution \(u^-_j(\varphi_0,c\pi_0)\) to another solution \(u^+_j(\varphi_0,c\pi_0)\) for each \(j\) (due to compactness of the moduli space) of action at most \(\mathcal{A}'\). Therefore, by \autoref{lem:smallness_particlefield}, they are also periodic solutions of \(H\).
        \item For some representatives \(C_j\) of the Poincaré dual of \(\theta_i\), \(\pi_{\mathbb{T}^d}(\tilde{u}_j(\varphi_0,c\pi_0)(0,0))\in C_j\).
        \item\label{item:actionineq} The actions of the solution satisfy the inequalities
        \begin{equation*}
            \mathcal{A}(u_1^-(\varphi_0,c\pi_0)) < \mathcal{A}(u_1^+(\varphi_0,c\pi_0)) \leq \mathcal{A}(u_2^-(\varphi_0,c\pi_0)) < \dots < \mathcal{A}(u_d^+(\varphi_0,c\pi_0)).
        \end{equation*}
    \end{enumerate}
    We combine the solutions into measures
    \begin{equation*}
        \chi_j^\pm(\Phi)(u) = \sum_{(\varphi_0,c\pi_0)\in\Phi} \delta(u-u_j^\pm(\varphi_0,c\pi_0))\nu(\varphi_0,c\pi_0).
    \end{equation*}\par
    By choosing a particular sequence \((\Phi_n, \nu_n)_{n\in\mathbb{N}}\) which converges towards the continuous zero-point field distribution, we get a sequence of measures \((\chi_{j,n}^\pm)_{n\in\mathbb{N}}\). By \autoref{lem:convergenceofmeasure} below, we find that (a subsequence of) the measures must converge to a measure \(\chi_j^\pm\). These measures \(\chi_j^\pm\) are precisely solutions to the stochastic particle-field system. All that remains to prove is that at least \((d+1)\) of these are distinct. We will do this by proving that their actions,
    \begin{equation*}
        \mathcal{A}(\chi_j^\pm) \coloneqq \int_{C^1(S^1,X)} \mathcal{A}(u)\ \mathrm{d}\chi_j^\pm(u)
    \end{equation*}
    are distinct.
    \par
    To prove distinctness of the actions, we can similarly combine the Floer curves into measures \(\tilde{\chi}_j(\Phi)\) over \(C^1(\mathbb{R}\times S^1,X)\). By \autoref{lem:convergenceofcylindermeasure} below, some subsequences of these must converge to measures \(\tilde{\chi}_j\) on \(C^1([-S,S]\times S^1,X)\) for arbitrary \(S>0\).\par
    Let us use the Floer cylinder measure to prove that the actions are distinct. We know that the energy of a Floer cylinder is equal to the difference of the actions of the limits, so, since we only consider the \([-S,S]\) portion of the Floer cylinder, we know that
    \begin{equation*}
        \int_{C^1([-S,S]\times\mathbb{R},X)} \int_{-S}^S\int_{S^1} \lvert\partial_s\tilde{u}(s,t)\rvert^2\ \mathrm{d}t\mathrm{d}s\mathrm{d}\tilde{\chi}_j(\tilde{u}) \eqcolon E(\tilde{\chi}_j) \leq \mathcal{A}(\chi_j^+) - \mathcal{A}(\chi_j^-).
    \end{equation*}
    Since the left-hand side of the equation is clearly nonnegative, we find that \(\mathcal{A}(\chi_j^+) \geq \mathcal{A}(\chi_j^-)\). All that remains to do is prove that this inequality is strict. Indeed, if the energy of \(\tilde{\chi}_j\) were zero for some \(j\), we know that it has full measure on the portion of functions which are constant with respect to \(s\) on \([-S,S]\).  However, this would imply that some \(\tilde{\chi}_{j,n}\) have nonzero measure on this sector, and therefore, there would be \((\varphi_0,c\pi_0)\) such that \(\partial_s\tilde{u}_j(\varphi_0,c\pi_0)=0\) on \([-S,S]\). By the unique continuation principle, this must hold also on \(\mathbb{R}\), and therefore, \(E(\tilde{u}_j(\varphi_0,c\pi_0))=0\), which gives a contradiction with property (\ref{item:actionineq}) of our periodic orbits. Therefore, the energy is positive, and we conclude
    \begin{equation*}
        \mathcal{A}(\chi_1^-) < \mathcal{A}(\chi_1^+) \leq \mathcal{A}(\chi_2^-) < \dots < \mathcal{A}(\chi_d^+).
    \end{equation*}
    There are therefore at least \(d+1\) solutions. This concludes the proof.
\end{proof}
Let us prove the lemmata used in the last proof.
\begin{lemma}\label{lem:convergenceofmeasure}
    Let \(\chi_n\coloneqq \chi_{j,n}^\pm\) for some \(j,\pm\). The sequence \((\chi_n)_{n\in\mathbb{N}}\) is asymptotically tight. In particular, it admits a subsequence which converges (weakly) to some Borel measure.
\end{lemma}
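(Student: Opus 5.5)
The plan is to prove asymptotic tightness via Prokhorov's theorem on the Polish space \(C^1(S^1,X)\). For every \(\eta>0\) we need a compact \(K_\eta\subseteq C^1(S^1,X)\) with \(\liminf_n\chi_n(K_\eta)\geq 1-\eta\). Once tightness holds, Prokhorov (in its asymptotic form for sequences) gives a weakly convergent subsequence. Since each \(\chi_n\) is a probability measure, the limit is a Borel probability measure.

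The construction of \(K_\eta\) goes through the zero-point field. The discretized distributions \(\nu_n\) converge to the Gaussian distribution of \((\varphi_0,c\pi_0)\), so their pushforwards form a tight family in \(\dot{H}^{1/2}\) (or in \(\dot{H}^1\), if the Gaussian is supported there). Concretely, I will first choose \(\mathbb{A}\) so that \(\nu_n(\lVert(\varphi_0,c\pi_0)\rVert_{\dot{H}^1}>\mathbb{A})<\eta/2\) for all \(n\) large. I will then pick a compact set \(\Phi_\eta\) of free fields in the \(\dot{H}^{1/2}\)-topology, for instance a product of bounded Fourier-coefficient boxes whose squared weights decay summably, with \(\nu_n(\Phi_\eta)\geq 1-\eta\). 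Free fields from \(\Phi_\eta\) evolve by the bounded unitary group \(\phi_t\) of \autoref{rmk:reasonforH12}. Hence the set of free trajectories \(t\mapsto\phi_t(\varphi_0,c\pi_0)\) is compact in \(C^1(S^1,\dot{H}^{1/2})\), provided we also control one more derivative (hence the \(\dot{H}^1\) or higher bound).

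Next I will control the remaining components of the orbits \(u_j^\pm(\varphi_0,c\pi_0)\) for \((\varphi_0,c\pi_0)\in\Phi_\eta\), using \autoref{lem:smallness_particlefield}. That lemma bounds \(\lvert p\rvert\leq R_1\) uniformly. The positions \(q\) lie in the compact torus. The equations of motion then bound \(\dot q,\dot p,\ddot q,\ddot p\), because \(F_t\) is \(C^1\)-bounded (a \(C^2\) bound would be needed for second derivatives), \(\rho\in\dot{H}^{r-1}\), and the fields are bounded. So the particle components lie in a set that is bounded in \(C^2\), and by Arzelà--Ascoli this set is compact in \(C^1\). For the field part \((\varphi',c\pi')\), the lemma gives \(\dot{H}^k\)-bounds for every \(k\leq1\). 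Moreover, \autoref{cor:DiophantineBounds} bounds \((\varphi',c\pi')\) in a strictly higher norm than \(\dot{H}^{1/2}\) in terms of the source \(\rho\in\dot{H}^{r-1}\). I will use this as the compact embedding: \(\dot{H}^{1/2+\delta}\hookrightarrow\dot{H}^{1/2}\) is compact on the torus. Time derivatives are controlled by the equations \(\dot\varphi'=-(-\Delta)^{1/2}c^2\pi'\) and its companion, at one order of regularity lower. I will also use the Duhamel formula \((\varphi',c\pi')(t)=\phi_t(\cdot)+\int\phi_{t-s}\nabla\overline F_{\mathrm{int}}\), where the integrand is uniformly bounded in higher Sobolev norms. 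Together these give equicontinuity in \(C^1\) together with pointwise relative compactness, so Arzelà--Ascoli applies to the field components as well.

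Let \(K_\eta\) be the closure of the set of all such orbits with zero-point field in \(\Phi_\eta\). It is compact, and \(\chi_n(K_\eta)\geq\nu_n(\Phi_\eta)\geq1-\eta\), which is the required tightness. I expect the main obstacle to be the regularity bookkeeping. Compactness in \(C^1(S^1,X)\) needs control of \(u\) in \(C^2\)-type norms, or at least in norms compactly embedded in \(C^1\), and that costs one extra order of Sobolev regularity for both the free field and the induced field. This is why the cutoff and discretization must still yield uniform \(\dot{H}^{1/2+1+\delta}\)-type bounds on \(\Phi_\eta\) with high probability. If the Gaussian with \(\sigma=\sqrt\hbar\) is not supported in such a space, I would fall back to working with a weaker topology in time, for example \(C^1\) with values in \(\dot{H}^{-1/2}\). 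In that case the subsequent measure convergence would be verified in this weaker sense.
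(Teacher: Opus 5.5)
\textbf{Gap: no uniform action bound.} Your overall skeleton matches the paper's: tightness of \((\nu_n)\) in \(\dot H^1\), uniform bounds from \autoref{lem:smallness_particlefield}, a compact set, then Prokhorov. The problem is the sentence ``That lemma bounds \(\lvert p\rvert\leq R_1\) uniformly''. \autoref{lem:smallness_particlefield} only controls \(T\)-periodic orbits of \(\overline H\) whose action is at most a prescribed \(\mathcal{A}'\). Its momentum bound comes from \(\lvert p\rvert_{L^2}^2/2M\leq\mathcal{A}'+\dots\), so it degenerates as \(\mathcal{A}'\) grows. In the proof of the main theorem, the action window \(\mathcal{A}'\) and the cutoff radii are chosen separately for each finite \(\Phi_n\). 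The \(\dot H^1\)-diameter of \(\Phi_n\) also blows up as \(\nu_n\) approaches a Gaussian. So your argument does not prevent the actions of \(u_j^\pm(\varphi_0,c\pi_0)\), and hence \(R_1\), from growing with \(n\), even for \(\varphi_0\) in a fixed \(\dot H^1\)-ball. Then \(K_\eta\) is not one fixed compact set.

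The missing ingredient is an a priori action estimate that depends on \(\mathbb{A}\) alone. This is the Floer-theoretic content of the paper's proof. It uses the energy estimate for the \(s\)-dependent Floer equation, as in \cite[Proposition 4.4]{fabertCuplengthEstimatesPeriodic2023} and \cite[Proposition 9.1.4]{mcduffJholomorphicCurvesSymplectic2012}. The conclusion is that the cuplength orbits whose zero-point field lies in the \(\mathbb{A}\)-ball have action at most a function quadratic in \(\mathbb{A}\) (coming from \(F_\text{part}[\varphi_0,\pi_0]\)) plus \(4T\lvert\overline F_\text{int}\rvert_{C^0}\). Moreover \(\lvert\overline F_\text{int}\rvert_{C^0}\leq R_2\), and \(R_2\) does not depend on \(\mathbb{A}\) or \(n\). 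Only with this single action bound can you apply \autoref{lem:smallness_particlefield} once and get \(R_1\), \(R_2\) and the \(\dot H^1\)-bound on \((\varphi',c\pi')\) uniformly in \(n\).

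\textbf{Secondary issue: regularity.} As you partly anticipate, the Arzel\`a--Ascoli step in \(C^1(S^1,X)\) does not close under the stated hypotheses. \autoref{lem:smallness_particlefield} bounds \((\varphi',c\pi')\) in \(\dot H^k\) only for \(k\leq1\): the estimate of \(\nabla^{\dot H^{1/2}}\overline F_\text{int}\) in \(\dot H^{k+r-1}\) uses exactly \(\rho\in\dot H^{r-1}\), and the small-divisor inversion costs \(r-1\) derivatives. But \(\dot\varphi'=-(-\Delta)^{1/2}c^2\pi'\in\dot H^{1/2}\) requires \(\pi'\in\dot H^{3/2}\), and bounding \(\ddot q\) requires \(F_t\in C^2\).

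The paper does not attempt this curve-level regularity. It only produces a compact set \(C_{R_1,R_2}\subseteq X\): the torus times a momentum ball times an \(\dot H^1\)-ball, compact in \(\dot H^{1/2}\) by Rellich--Kondrachov. It then invokes Prokhorov--Le Cam. So your auxiliary compact \(\Phi_\eta\) and the extra order of regularity are not needed for the argument as the paper runs it. A weaker time topology, as in your fallback, is the honest way to make the statement on curve space. Your worry that the \(\sigma=\sqrt\hbar\) Gaussian is not supported in \(\dot H^1\) is also legitimate; the paper simply asserts the \(\dot H^1\)-tightness of \((\nu_n)\).
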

\begin{proof}
    This proof is based on \cite[Lemma 3.2]{fabertCuplengthEstimatesTimeperiodic2024}. Since each \(\Phi\) is contained in \(\dot{H}^1\), and the sequence is convergent, there must exist for any \(\epsilon>0\) an \(\mathbb{A}\) such that
    \[\liminf_{n\to\infty} \nu_n(\dot{H}^1_{\lVert\cdot\rVert_{\dot{H}^1}\leq\mathbb{A}}(\mathbb{T}^d,\mathbb{R}))\geq 1-\epsilon.\]\par
    As in \cite[Proposition 4.4]{fabertCuplengthEstimatesPeriodic2023}, and using \cite[Proposition 9.1.4]{mcduffJholomorphicCurvesSymplectic2012}, we know the action of the orbits of \(\chi_n\) whose vacuum fields are in \(\dot{H}^1_{\lVert\cdot\rVert_{\dot{H}^1}\leq\mathbb{A}}(\mathbb{T}^d,\mathbb{R})\) are bounded by some function which is at most quadratic in \(\mathbb{A}\) plus \(4T\lvert \overline{F}_\text{int}\rvert_{C^0}\). By the proof of \autoref{lem:smallness_particlefield}, there exists a global upper bound for all vacuum fields. Thus, by the same lemma, there are corresponding \(C^0\)-bounds \((R_1,R_2)\) of all of the orbits.
    Since the \(\dot{H}^1\)-norm is bounded (as per \autoref{lem:smallness_particlefield}), these induce a subset \(C_{R_1,R_2}\) of \(T^*\mathbb{T}^d\times(\dot{H}^{1/2}(\mathbb{T}^d)\oplus \dot{H}^{1/2}(\mathbb{T}^d))\) with measure \(\liminf_{n\to\infty}\chi_n(C_{R_1,R_2})\geq 1-\epsilon\), which is compact by the Kondrachov embedding theorem. Therefore, \((\chi_n)_{n\in\mathbb{N}}\) is asymptotically tight. By the Prokhorov--Le Cam theorem, the measure sequence is precompact, and thus has a convergent subsequence.
\end{proof}
The same results hold for the Floer curve measures.
\begin{lemma}\label{lem:convergenceofcylindermeasure}
    We restrict ourselves to values of \(s\) in \([-S,S]\), and let \(\tilde{\chi}_n\coloneqq \chi_{j,n}\) for some \(j\). The sequence \((\tilde{\chi}_n)_{n\in\mathbb{N}}\) is asymptotically tight. In particular, it admits a subsequence which (weakly) converges to some Borel measure.
\end{lemma}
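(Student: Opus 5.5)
The plan is to follow the proof of \autoref{lem:convergenceofmeasure} and replace the compact set of orbits by a compact set of Floer cylinders restricted to \([-S,S]\times S^1\). First, fix \(\epsilon>0\). As before, choose \(\mathbb{A}\) such that \(\liminf_{n\to\infty}\nu_n\) of the \(\dot{H}^1\)-ball of radius \(\mathbb{A}\) is at least \(1-\epsilon\). Only the cylinders \(\tilde{u}_j(\varphi_0,c\pi_0)\) with \((\varphi_0,c\pi_0)\) in this ball are relevant, since the rest carry mass at most \(\epsilon\) in the limit. For these cylinders, the asymptotic orbits \(u_j^\pm\) have action bounded by a quantity \(\mathcal{A}'(\mathbb{A})\), which is at most quadratic in \(\mathbb{A}\). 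Hence the energy \(E(\tilde{u}_j)=\mathcal{A}(u_j^+)-\mathcal{A}(u_j^-)\) is bounded uniformly in \(n\) and in the zero-point field. By \autoref{lem:smallness_particlefield}, the endpoint orbits also lie in the set \(C_{R_1,R_2}\), with uniform \(R_1,R_2\).

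Next, obtain uniform \(C^0\)-bounds for the whole cylinder on \([-S,S]\). For the momentum component, use the maximum principle argument of \cite[Theorem 7.6]{cieliebakPseudoholomorphicCurvesPeriodic1994}, in the form used in \cite{fabertCuplengthEstimatesPeriodic2023}. The cut-off \(\overline{F}_\text{int}\) is supported in \(\lvert p\rvert\le R_1\), and \(F_\text{part}\) is kinetic energy plus a \(C^1\)-bounded term. This forces \(\lvert p\rvert\) to be bounded in terms of \(\mathbb{A}\) and the energy bound. For the field component \((\varphi',c\pi')\), the Floer equation in the field directions is linear up to the forcing \(\nabla\overline{F}_\text{int}\). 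That forcing is bounded in \(\dot{H}^{k+r-1}\) by \(5(L/2\pi)^{1-k}\epsilon\), as in the proof of \autoref{lem:smallness_particlefield}. I would solve this linear equation in Fourier modes, using the Diophantine estimate of \autoref{lem:DiophantineEigvals} for the operator \(J\partial_t - \nabla H_\text{field}\) on \(T\)-periodic loops. The aim is a uniform \(\dot{H}^1\) (so compactly embedded in \(\dot{H}^{1/2}\)) bound on \(\tilde{u}(s,\cdot)\) for every \(s\in[-S,S]\), with no loss in \(S\).

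Then upgrade these bounds to compactness in \(C^1([-S,S]\times S^1,X)\). From the uniform \(C^0\)-bounds and the bounded energy, standard elliptic bootstrapping for the Floer equation gives uniform \(C^{1,\alpha}\) (indeed \(C^\ell\)) bounds on the finite-dimensional particle component, on a slightly larger strip \([-S-1,S+1]\). Gradient blow-up is excluded because \(T^*\mathbb{T}^d\) is exact, so there is no bubbling (\cite[Proposition 9.1.4]{mcduffJholomorphicCurvesSymplectic2012}, as in \autoref{lem:convergenceofmeasure}). On the field component, the same Fourier/Diophantine argument, applied to the \(s\)- and \(t\)-derivatives, gives bounds one Sobolev step higher, again using \(\rho_i\in\dot{H}^{r-1}\). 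Arzelà--Ascoli together with the Kondrachov embedding theorem then show that the set \(\tilde{C}\) of cylinders satisfying all these bounds is compact in \(C^1([-S,S]\times S^1,X)\). The construction gives \(\liminf_n\tilde{\chi}_n(\tilde{C})\ge1-\epsilon\), which is asymptotic tightness. The Prokhorov--Le Cam theorem then provides a weakly convergent subsequence. Finally, a diagonal argument over \(S\in\mathbb{N}\) makes the limits compatible under restriction.

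I expect the main obstacle to be the uniform field bound along the whole cylinder. \autoref{lem:smallness_particlefield} uses \(T\)-periodicity of solutions, via \autoref{cor:DiophantineBounds}, and Floer cylinders are only periodic in \(t\), not in \(s\). I would first try inverting the linear operator \(\partial_s - J\partial_t + J\nabla H_\text{field}\) mode by mode on \(\mathbb{R}\times S^1\). Its symbol on mode \((k,\ell)\) is bounded below by the Diophantine gap \(\lvert 2\pi\ell/T - c\lvert k\rvert\cdot 2\pi/L\rvert\gtrsim\lvert k\rvert^{1-r}\), and \(L^2\)-invertibility on the line should then give \(s\)-uniform bounds on the field component.
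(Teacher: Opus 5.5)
Your proposal follows the paper's strategy. You restrict to zero-point fields in the $\dot{H}^1$-ball of radius $\mathbb{A}$ and get uniform action, energy and endpoint bounds from \autoref{lem:smallness_particlefield}. You bound the momentum by Cieliebak's $C^0$-estimate; the paper cites \cite[Theorem 5.4]{cieliebakPseudoholomorphicCurvesPeriodic1994} for this step. You bound the field by inverting the Diophantine-gapped linear part of the Floer operator against the forcing bound $5\left(\frac{L}{2\pi}\right)^{1-k}\epsilon$, exclude gradient blow-up by exactness, and finish with Kondrachov and Prokhorov--Le Cam.

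Where you differ is in handling the lack of periodicity in $s$, which you rightly identify as the crux. The paper works locally. It bounds $(\partial_s+J\partial_t)(\phi_{-t}\circ(\varphi',c\pi'))$ in $L^2([-S,S]\times S^1,\dot{H}^{k+r-1})$ and extends the cylinder beyond $[-S,S]$ so that \cite[Lemma 4.9]{fabertFloerHomologyHamiltonian2021} applies. This yields $H^1([-S,S]\times S^1,\dot{H}^k)$-bounds for $k<1$, and the paper then needs a separate no-bubbling argument for $C^0$-control of $\partial_s(\varphi',c\pi')$. You invert on all of $\mathbb{R}\times S^1$ mode by mode. Done correctly, this gives bounds uniform in $s$, reaching $k=1$, independent of $S$, and with no cut-off error term. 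Your explicit Arzel\`a--Ascoli step in $C^1([-S,S]\times S^1,X)$ and the diagonal argument in $S$ also make precise what the paper leaves implicit. The paper's route, in turn, only needs an off-the-shelf $L^2$-estimate for a compactly supported modification of the cylinder.

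Two points need repair.

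First, ``$L^2$-invertibility on the line'' is not the right tool. The cylinders are Floer curves of $\overline{H}$, so the forcing $\nabla\overline{F}_\text{int}$ stays switched on and does not decay as $s\to\pm\infty$. The field component also tends to the (in general nonzero) fields of $u_j^\pm$. Neither lies in $L^2(\mathbb{R}\times S^1)$, and an $L^2$-bound would not be uniform in $s$ anyway. Use the $L^\infty$-version instead. Write $w_k,g_k$ for the $k$-th spatial Fourier components of the field and the forcing. On mode $k$ the operator is $\partial_s+A_k$, with $A_k$ self-adjoint on $L^2(S^1)$ and $\lvert A_k\rvert\geq\delta_k\gtrsim\lvert k\rvert^{1-r}$. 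Its exponential dichotomy gives $\sup_s\lVert w_k(s)\rVert_{L^2(S^1)}\leq 2\delta_k^{-1}\sup_s\lVert g_k(s)\rVert_{L^2(S^1)}$ for the unique bounded solution. The cylinder's field is that solution, since each individual cylinder converges at its ends and the homogeneous equation has no nonzero bounded solutions. The supremum may be taken inside the sum over $k$ because $\lvert g_k\rvert\lesssim\lvert k\rvert^{-1}\lvert\hat{\rho}_k\rvert$ pointwise; translating $\rho$ only changes phases. This gives $\sup_s\lVert w(s)\rVert_{L^2(S^1,\dot{H}^1)}\lesssim\lVert\rho\rVert_{\dot{H}^{r-1}}$.

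Second, the extra Sobolev step you claim for the $t$-derivative of the field is not available under $\rho_i\in\dot{H}^{r-1}$. The derivative $\partial_t(\varphi',c\pi')$ contains $X_{F_\text{field}}(\varphi',c\pi')$, which costs a full spatial derivative. Since $\nabla^{\dot{H}^{1/2}}\overline{F}_\text{int}$ is controlled only in $\dot{H}^{r}$, the Diophantine estimate stops at $\dot{H}^1$, so that term is in general only bounded in $\dot{H}^0$. The paper's proof passes over the same point: it asserts the $\partial_t$-bound ``by the Floer equation''. So this is no deficit relative to the paper, but the $C^1$-compactness you state is not established by the argument as written.
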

\begin{proof}
    To prove this, we should prove the existence of \(C^0\)-bounds on the Floer cylinders. The rest then follows as in \autoref{lem:convergenceofmeasure}.\par
    Let \((\varphi',c\pi')\) be any field solution which makes up the discrete measures. By the Floer equation, we have on the Floer cylinder for \(k\leq 1\),
    \begin{align*}
        \lVert(\partial_s + J\partial_t)(\phi_{-t}\circ(\varphi',c\pi'))\rVert_{L^2([-S,S]\times S^1,\dot{H}^{k+r-1})} &\leq \sqrt{2ST}\sup{\left\lVert\nabla^{\dot{H}^{1/2}}\overline{F}_{\text{int}}\right\rVert_{\dot{H}^{k+r-1}}}\\
        &\leq \sqrt{2ST}5\left(\frac{L}{2\pi}\right)^{1-k}\lVert\rho\rVert_{\dot{H}^{r-1}}.
    \end{align*}
    Therefore, by \cite[Lemma 4.9]{fabertFloerHomologyHamiltonian2021} (after extending the cylinder by letting the field go to zero far away outside \([-S,S]\)), we have that
    \begin{equation*}
        \lVert(\varphi',c\pi')\rVert_{H^1([-S,S]\times S^1,\dot{H}^k)} = \lVert\phi_{-t}\circ(\varphi',c\pi')\rVert_{H^1([-S,S]\times S^1,\dot{H}^k)}
    \end{equation*}
    is bounded for \(k<1\).\par
    Assume that there is no \(C^0([-S,S]\times S^1,\dot{H}^{1/2})\)-bound for \(\partial_s(\varphi',c\pi')\). Then there exists a sequence such that the energy density at some point goes to infinity: the cylinder bubbles off a holomorphic sphere. Due to the exactness of the symplectic form, this is impossible. Hence, a \(C^0\)-bound must exist. By the Floer equation, the same must hold for \(\partial_t(\varphi',c\pi')\).\par
    Combining the bounds, we find \(C^1([-S,S]\times S^1,\dot{H}^{1/2})\)-bounds (and, therefore, \(C^0\)-bounds) on \((\varphi',c\pi')\).
    Furthermore, by \cite[Theorem 5.4]{cieliebakPseudoholomorphicCurvesPeriodic1994}, we also have bounds for the momentum on the Floer cylinder. The proof then follows as in \autoref{lem:convergenceofmeasure}.
\end{proof}
\section{Outlook}\label{sec:C0bounds}
As the first step in constructing stochastic orbits in general SED (outside the nonmagnetic limit), we should prove something akin to \autoref{lem:smallness_particlefield}. However, as remarked there, this will not work, due to some of the quadratic terms in the interaction Hamiltonian. Let us sketch how this relates to the proof method.\par
Let \(K^\bullet\in\{H^\bullet,\dot{H}^\bullet\}\) on a space of any dimension, and consider a polynomial interaction Hamiltonian of the form
\begin{equation}
    H_\text{int} = gp^m\langle\varphi,\rho(x-q)\rangle_{L^2}^n,\label{eq:generalHint}
\end{equation}
with \(\varphi\in K^{1/2}\), \(m\geq 0\), \(n>0\) integers, and \(g\in\mathbb{R}\). We also define \(\epsilon \coloneqq \lVert\rho\rVert_{K^{r-1}}\) (and if \(K^\bullet = \dot{H}^\bullet\), we assume that \(\rho\) integrates to zero). For simplicity, we work with only one charged particle, but our arguments also generalize to several particles. The statement we want to show is the following.
\begin{claim}
    For \(\rho\) satisfying some requirements and the action bounded by some \(\mathcal{A}'\), for all finite sets of zero-point fields, there exist \(R_1,R_2\) such that all \(T\)-periodic orbits of the \((R_1,R_2)\)-cutoff Hamiltonian are also \(T\)-periodic orbits of the original Hamiltonian.
\end{claim}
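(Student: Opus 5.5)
The plan is to mimic the proof of \autoref{lem:smallness_particlefield}, now with the general interaction \eqref{eq:generalHint}. The cutoff Hamiltonian is
\[
\overline{H} = F_\text{part}[\varphi_0,\pi_0] + \chi_{R_1}(\lvert p\rvert)\chi_{R_2}(\lvert\langle\varphi',\rho\rangle_{L^2}\rvert)F_\text{int} + F_\text{field}.
\]
Only finitely many zero-point fields occur, so their \(\dot{H}^1\)-norms are bounded by some \(\mathbb{A}\). The aim is to produce \(R_1\) and \(R_2\) such that every \(T\)-periodic orbit of \(\overline{H}\) with action at most \(\mathcal{A}'\) satisfies
\[
\lvert p\rvert\le R_1/2,\qquad \lvert\langle\varphi',\rho\rangle\rvert\le R_2/2 .
\]
On that region the cutoffs equal one, so such orbits solve Hamilton's equations for \(H\).

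\emph{Step 1: field bound.} Apply \autoref{cor:DiophantineBounds} to the off-shell field exactly as before:
\[
\lVert(\varphi',c\pi')\rVert_{K^k}\lesssim cT\sup\lVert\nabla^{K^{1/2}}\overline{F}_\text{int}\rVert_{K^{k+r-1}} .
\]
The gradient of \(\overline{F}_\text{int}\) in the field direction has two parts.
\begin{itemize}
\item The main term is \(g\,n\,p^m\langle\varphi,\rho\rangle^{n-1}\rho\), which on the support of the cutoffs is at most \(\lvert g\rvert n R_1^m R_2^{n-1}\epsilon\).
\item The derivative falling on \(\chi_{R_2}\) gives a term of size at most \(\tfrac{4}{R_2}R_1^m R_2^n\epsilon\).
\end{itemize}
Here \(\varphi=\varphi_0+\varphi'\) in the interaction, so \(\lvert\langle\varphi,\rho\rangle\rvert\le R_2+C\mathbb{A}\epsilon\), where \(C\) is a Sobolev constant. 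Combining, we get \(\lvert\langle\varphi',\rho\rangle\rvert\le C_1 R_1^m(R_2+\mathbb{A}\epsilon)^{n-1}\epsilon^2\). To close the bootstrap we need this to be at most \(R_2/2\).

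\emph{Step 2: momentum bound.} As in \autoref{lem:smallness_particlefield}, use the action formula \eqref{eq:actiondefinition} together with the equation \(\dot q=\partial_p\overline{H}\) to bound \(\lvert p\rvert_{L^2}^2\) by \(\mathcal{A}'\) plus the field energy plus \(T\sup\lvert \overline{F}_\text{int}\rvert\). Then bound \(\lvert\dot p\rvert_{L^2}\) through the \(q\)-derivative of the interaction, which is of order \(R_1^m R_2^{n-1}\epsilon\cdot\lVert\nabla\varphi\rVert\). Combining these gives an estimate of the form
\[
\lvert p\rvert\le C_2\bigl(\mathbb{A}+R_1^{m/2}R_2^{n/2}+R_1^mR_2^{n-1}\epsilon(\mathbb{A}+\dots)\bigr),
\]
and we need this to be at most \(R_1/2\). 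When \(m\ge1\) the \(\dot q\) term \(\partial_pH_\text{int}\) also spoils the clean quadratic relation between action and \(\lvert p\rvert^2\). This must be handled by absorbing \(g m p^{m-1}\langle\varphi,\rho\rangle^n\cdot p\) into \(\lvert p\rvert^2/2M\), which requires \(m\le 1\) or a smallness condition.

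\emph{Main obstacle: making the inequalities close.} Steps 1 and 2 yield the coupled system
\[
C_1R_1^m(R_2+\mathbb{A}\epsilon)^{n-1}\epsilon^2\le R_2/2,\qquad C_2(\dots)\le R_1/2 .
\]
When \(m=0\) and \(n=1\) this is the earlier lemma. In general the right-hand sides are superlinear in \((R_1,R_2)\), for example for \(n\ge2\), or for \(m\ge1\) with \(n\ge1\), as with \(p\cdot\langle A\rangle\) and \(\langle A\rangle^2\). Then no choice with large \(R_i\) works. I would first try taking \(R_1\) large in terms of \(\mathbb{A}\) and \(\mathcal{A}'\), and then \(R_2\) of order \(\epsilon^2\), imposing as the "requirement on \(\rho\)" that \(\epsilon\) is small enough, say \(C_1R_1^m(\dots)\epsilon^2\ll1\). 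Because \(R_1\) grows with \(\mathbb{A}\), this smallness depends on the finite set of zero-point fields. That dependence is acceptable for the claim as stated, but it is presumably fatal for the uniform tightness needed in \autoref{lem:convergenceofmeasure}.

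If this smallness cannot be arranged, for instance when \(2m+n>2\) and the \(R_1\) dependence feeds back, I would expect the claim to fail. This is precisely the failure that \autoref{lem:smallness_particlefield} warned about outside the nonmagnetic limit.
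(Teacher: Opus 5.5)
Your plan (rerun the bootstrap of \autoref{lem:smallness_particlefield} and track how each bound scales in \(R_1,R_2,\mathbb{A}\)) is the paper's approach, but you put the boundary between good and bad cases in the wrong place. Take \(m=0\), \(n=2\). Your Step~1 inequality then reads \(C_1\epsilon^2(R_2+\mathbb{A}\epsilon)\le R_2/2\), which is affine in \(R_2\), not superlinear. Once \(C_1\epsilon^2<1/2\), a requirement on \(\rho\) that does not involve \(\mathbb{A}\), it is solved by an \(R_2\) growing linearly in \(\mathbb{A}\). Since \(m=0\), your momentum estimate has no \(R_1\) on its right-hand side, so \(R_1\) can simply be taken large; the paper finds \(\lvert p\rvert\propto\mathbb{A}^2\). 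So the \(\langle A\rangle^2\)-type term is harmless. The paper concludes that the strategy works exactly for \(m=0\), \(n\in\{1,2\}\), and that \(p_i\cdot\langle A\rangle_{q_i}\) is the only obstruction in SED. Your closing threshold \(2m+n>2\) agrees with this, but it contradicts your earlier statement that \(n\ge2\) makes the right-hand sides superlinear.

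The more serious problem is the quantifier order. In the claim, \(\rho\) and its requirements are fixed before the finite set of zero-point fields is chosen. So \(R_1,R_2\) must exist for arbitrarily large \(\mathbb{A}\) with \(\epsilon\) fixed. A smallness condition on \(\epsilon\) that depends on \(\mathbb{A}\) is therefore not ``acceptable for the claim as stated''; it is precisely what the paper rules out as shrinking \(\epsilon\) arbitrarily small. For \(m\ge1\) (take \(n=1\)), \(R_2\propto\epsilon^2R_1^m\) feeds back into \(\lvert p\rvert\propto\epsilon^2R_1^{2m}+\mathbb{A}\). The condition \(R_1\gtrsim\epsilon^2R_1^{2m}+\mathbb{A}\) then has no solution once \(\mathbb{A}\) is large compared with a negative power of \(\epsilon\). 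Hence for \(m\ge1\) the claim itself fails under this strategy, not just the later tightness argument in \autoref{lem:convergenceofmeasure}. Your proposed fix only proves the weaker statement in which \(\epsilon\) is chosen after \(\mathbb{A}\).
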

\autoref{lem:smallness_particlefield} is, of course, an example of such a statement, where the restriction on \(\rho\) is that it is \(\dot{H}^{r-1}\). We will attempt the same proof strategy, and simply consider how each of the relevant bounds scales with \(R_1,R_2,\mathbb{A}\). For \(k\leq 1\),
\begin{equation*}
    \lVert\nabla^{K^{1/2}}\overline{F}_\text{int}\rVert_{K^{k+r-1}} \propto \epsilon R_1^m(R_2+\mathbb{A}\epsilon)^{n-1},
\end{equation*}
such that, when we use that a variant of \autoref{cor:DiophantineBounds} also holds for \(H^\bullet\),
\begin{equation*}
    \lVert (\varphi',c\pi')\rVert_{K^k} \propto \epsilon R_1^m(R_2+\mathbb{A}\epsilon)^{n-1}.
\end{equation*}
Thus, \(R_2 \propto \epsilon^2 R_1^m(R_2+\mathbb{A}\epsilon)^{n-1}\), which, since \(R_1\) must grow with \(\mathbb{A}\), only allows solutions for arbitrarily large \(\mathbb{A}\) if \(n\in \{1,2\}\) (for \(\epsilon\) small enough).\par
If \(n=1\), we find \(R_2\propto \epsilon^2R_1^m\). Therefore, \(F_\text{int}\propto R_1^m(\epsilon^2R_1^m + \mathbb{A}\epsilon)\), and \(F_\text{field}\propto (\epsilon R_1^m + \mathbb{A})^2\). It follows that
\begin{equation*}
    \lvert p\rvert \propto \epsilon^2R_1^{2m} + \mathbb{A},
\end{equation*}
so \(R_1\propto \epsilon^2R_1^{2m} + \mathbb{A}\). Since \(\mathbb{A}\) may grow arbitrarily large, there comes a point where, for \(m\geq 1\), there are no more solutions. Therefore, we must impose \(m=0\) for \(C^0\)-bounds to hold.\par
If \(n=2\), the growing of \(R_1\) once again introduces a restriction to defining \(R_2\) in arbitrary values: \(m=0\), such that \(R_2\propto\mathbb{A}\epsilon^2\). It then follows that \(\lvert p\rvert \propto \mathbb{A}^2\), and thus, \(R_1\) can be defined for any value of \(\mathbb{A}\).\par
We conclude: the proof strategy only applies when \(m=0\) and \(n\in\{1,2\}\). This proves problematic for the existence of orbits in SED: the \(p_i\cdot\langle A\rangle_{q_i}\) term makes it so that for large enough \(\mathbb{A}\), there are no solutions for \(R_1,R_2\), unless \(\epsilon\) is shrunk arbitrarily small. However, there is a relation between the proof strategy and regularization that may provide hope still. \par 
Both classical electrodynamics and QED have divergences caused by the same origin: electron self-interaction through the electromagnetic field. To get rid of this divergence, one must impose a regularization, i.e., some parametrization of the divergence. In (relativistic) QFTs, one uses dimensional regularization, but for our purpose, we use a (nonrelativistic) momentum cutoff. This means that we introduce a parameter \(\Lambda\) which cuts off the momentum domain in any integrals. This regularized electrodynamics admits finite results, which diverge only for \(\Lambda\to\infty\). By getting rid of the divergent terms, one gets a renormalized theory, which is what we consider to be physical. A famous argument for renormalization is that we do not know what happens at very low scales (equivalently, high momentum scales), so we should only consider theories at measurable scales: those regularized by a value \(\Lambda\).\par
In our proof of \(C^0\)-bounds, we fix the distance scale (through \(\rho\)), and the maximal action, and prove that there exist momentum cutoffs \((R_1,R_2)\) such that orbits agree. In other words, the theory is regularized in such a way that preserves orbits! Thus, after regularization, the theory admits periodic orbits, since the \(C^0\)-bounds hold trivially, though these orbits may not be orbits of the unregularized Hamiltonian. This gives us the hunch that the renormalized theory also admits (stochastic) periodic orbits. To prove this, we would require a broader framework to define the convergence of regularized orbits to a renormalized one.
\printbibliography[heading=bibintoc]
\end{document}